    \pgfplotsset{
        cycle list/.define={my marks}{
            every mark/.append style={solid,fill=\pgfkeysvalueof{/pgfplots/mark list fill}},mark=*\\
            every mark/.append style={solid,fill=\pgfkeysvalueof{/pgfplots/mark list fill}},mark=square*\\
            every mark/.append style={solid,fill=\pgfkeysvalueof{/pgfplots/mark list fill}},mark=triangle*\\
            every mark/.append style={solid,fill=\pgfkeysvalueof{/pgfplots/mark list fill}},mark=diamond*\\
        },
    }
\crefname{hypothesis}{Hypothesis}{Hypotheses}
\title{A Robust Algebraic Domain Decomposition Preconditioner for Sparse Normal Equations\thanks{Submitted to the editors \today.}}
\author{Hussam Al Daas\thanks{STFC Rutherford Appleton Laboratory, Harwell Campus, Didcot, Oxfordshire, OX11 0QX, UK 
  (\email{hussam.al-daas@stfc.ac.uk},
  \email{jennifer.scott@stfc.ac.uk}).}
  \and Pierre Jolivet\thanks{CNRS, ENSEEIHT, 2 rue Charles Camichel, 31071 Toulouse Cedex 7, France (\email{pierre.jolivet@enseeiht.fr}).}
\and Jennifer A.~Scott\footnotemark[2] \thanks{School of Mathematical, Physical and Computational Sciences,
University of Reading, Reading RG6 6AQ, UK.}
}
\newcommand{\R}{\mathbb{R}}
\newcommand{\part}[1]{\Omega_{#1}}
\newcommand{\res}[1]{R_{#1}}
\newcommand{\rest}[1]{R^\top_{#1}}
\newcommand{\Z}[1]{Z_{#1}}
\newcommand{\nomega}[1]{n_{#1}}
\newcommand{\Int}[1]{\mathcal{R}_{#1}}
\newcommand{\rows}[1]{\Xi_{#1}}
\newcommand{\schwarz}[1]{M^{-1}_{\text{\tiny{#1}}}}
\newcommand{\Inte}[1]{\mathcal{R}_{#1}}
\newcommand{\spn}[1]{\text{span}\left\{#1\right\}} 
\begin{document}

\maketitle

\begin{abstract}
Solving the normal equations corresponding to large sparse linear
least-squares problems is an important and challenging
problem. For very large problems, an iterative
solver is needed and, in general, a preconditioner is required
to achieve good convergence.
In recent years, a number of preconditioners have been proposed.
These are largely serial and reported results demonstrate that
none of the commonly used preconditioners for the normal equations
matrix is capable of solving all sparse least-squares problems.
Our interest is thus in 
designing new preconditioners for the normal equations that are efficient, robust,
and can be implemented in parallel. Our proposed preconditioners can be constructed
efficiently and algebraically without any knowledge of the problem and without
any assumption on the least-squares matrix except that it is sparse.
We exploit the structure of the symmetric positive definite normal equations matrix
and use the concept of algebraic local symmetric positive semi-definite splittings
to introduce  two-level Schwarz preconditioners for least-squares problems.
The condition number of the preconditioned normal equations is shown to be theoretically bounded
independently of the number of subdomains in the splitting. This upper bound can be adjusted using
a single parameter $\tau$ that the user can specify.
We discuss how the new preconditioners can be implemented on top of the PETSc library
using only 150 lines of Fortran, C, or Python code.
Problems arising from practical applications are used to compare the performance of the 
proposed new preconditioner with that of other preconditioners.
\end{abstract}

\begin{keywords}
  Algebraic domain decomposition, two-level preconditioner, additive Schwarz, normal equations, 
  sparse linear least-squares.
\end{keywords}


\section{Introduction}
We are interested in solving large-scale {sparse} linear  least-squares~(LS) problems
\begin{equation}
  \label{eq:Ax-b}
  \min_x\|Ax-b\|_2,
\end{equation}
where $A \in \R^{m\times n}$  ($m \ge n$) and $b\in\R^m$ are given.
Solving \cref{eq:Ax-b} is mathematically equivalent to solving the  $n \times n$  normal equations
   \begin{equation} \label{eq:normal}
         C x = A^\top b, \qquad C = A^\top A,
   \end{equation}
   where, provided $A$ has full column rank, the normal equations matrix $C$ is symmetric and positive definite (SPD).
Two main classes of methods may be used to solve the normal equations:
direct methods and iterative methods. A direct method proceeds by computing an explicit factorization,
either using a sparse Cholesky factorization of~$C$  or a ``thin'' QR factorization of
$A$.
While well-engineered direct solvers \cite{AguBGL16,Dav11,MKL} are highly robust, iterative methods may be preferred
because they generally  require significantly less storage (allowing them to
tackle very large problems for which the memory requirements of a direct solver
are prohibitive) and, in some applications, it may
not be necessary to solve the system with the high accuracy offered by a direct solver.
However, the successful application of an iterative method usually requires a suitable
preconditioner to achieve acceptable (and ideally, fast) convergence rates. Currently,
there is much less knowledge of preconditioners for LS problems than there
is for sparse symmetric linear systems and, as observed in Bru et al. \cite{BruMMT14}, ``the
problem of robust and efficient iterative solution of LS problems is much
harder than the iterative solution of systems of linear equations.'' This is, at least in
part, because $A$ does not have the properties of differential problems that can make
standard preconditioners effective for solving many classes of linear systems.

Compared with other classes of linear systems,
the development of preconditioners for sparse LS problems may be regarded as still 
being in its infancy. {Approaches include}
\begin{itemize}
    \item variants of block Jacobi (also known as block
      Cimmino) and SOR \cite{Elf80};
    \item incomplete factorizations such as incomplete Cholesky, QR, and LU factorizations,
    for example, \cite{BruMMT14,lisa:06,sctu:2016,sctu:2017b};
    \item sparse approximate inverses \cite{CuiH09}.
\end{itemize}
A review and performance comparison is given in \cite{GouS17}.
This found that, whilst none of the approaches {successfully solved}
all LS problems, limited memory incomplete Cholesky factorization preconditioners 
appear to be the most reliable.
The incomplete factorization-based preconditioners are designed for moderate size
problems because current approaches, in general, are not suitable for parallel computers. 
The block Cimmino method can be parallelized easily, however, it lacks
robustness as the iteration count to reach convergence cannot be controlled and
typically increases significantly when the number of blocks increases for a fixed
problem \cite{DufGRZ15}. Several techniques have been proposed to improve the
convergence of block Cimmino but they still lack robustness \cite{DumLPRT18}.
Thus, we are motivated to design a new class of
LS preconditioners that are not only reliable but can also be implemented in parallel.

{We restrict our study in this paper to the case where $C$ is sparse.
We observe that in some practical applications
the matrix $A$ contains a small number of rows that have many more
nonzero entries than the other rows, resulting in a dense matrix $C$. Several techniques, including matrix stretching and using the augmented system,
have been proposed to handle this type of problem. These result in solving a transformed system of sparse normal equations, see for example \cite{ScoT21}
and the references therein.
}

In \cite{AldG19}, Al Daas and Grigori presented a class of robust 
fully algebraic two-level additive Schwarz preconditioners
for  solving SPD linear systems of equations. They introduced the notion of an algebraic local symmetric
positive semi-definite~(SPSD) splitting of an SPD matrix  with respect to local subdomains. They used this
splitting to construct a class of second-level spaces that bound
the spectral condition number of the preconditioned system by a user-defined value. Unfortunately, Al Daas and Grigori
reported that for general sparse SPD matrices, constructing the splitting is prohibitively expensive.
Our interest is in examining whether the particular structure
of the normal equations matrix allows the approach to be successfully used for preconditioning LS problems.
In this paper, we show how to compute the splitting efficiently.
Based on this splitting, we apply the theory presented in \cite{AldG19}
to construct a two-level Schwarz preconditioner for the normal equations.

Note that for most existing preconditioners of the normal equations, there
is no need to form and store  the normal equations matrix  $C$ explicitly. 
For example, the lower triangular part of
its columns can be computed one at a time, used to perform the corresponding step of
an incomplete Cholesky algorithm, and then discarded. However, forming the normal
equations matrix, even piecemeal, can entail a significant overhead and can potentially  lead to a severe loss
of information in highly ill-conditioned cases.
Although building our proposed preconditioner does not need the explicit 
computation of $C$, our parallel implementation computes 
it efficiently and uses it to setup the preconditioner. This is mainly motivated by technical
reasons. As an example, state-of-the-art distributed-memory graph partitioners
such as ParMETIS~\cite{KarK98} or PT-SCOTCH~\cite{PelF96}
cannot directly  partition the columns of the \emph{rectangular} matrix $A$.
Our numerical experiments on highly ill-conditioned LS problems showed that forming $C$ 
and using a positive diagonal shift to construct
the preconditioner had no major effect on the robustness of the resulting preconditioner.

This paper is organized as follows.
The notation used in the manuscript is given at the end of the introduction.
In \cref{sec:dd}, we present an overview of domain decomposition (DD) methods for a sparse
SPD matrix.
We present a framework for the DD approach when applied to the sparse LS
problem in~\cref{sec:normal}. Afterwards, we show how to compute the local SPSD
splitting matrices efficiently and use them in line with the theory presented in
\cite{AldG19} to construct a robust two-level Schwarz preconditioner for the normal
equations matrix.
We then discuss some technical details that clarify how to construct the 
preconditioner efficiently.
In \cref{sec:numerical_experiments}, we briefly discuss how the new preconditioner 
can be implemented on top of the PETSc library \cite{PETSc} and
we illustrate its effectiveness using large-scale
LS problems coming from practical applications.
Finally, concluding comments are made in \cref{sec:conclusion}.

\paragraph{Notation}
We end our introduction by defining notation that will be used in this paper.
Let $1 \le n \le m$ and let 
$A \in \mathbb{R}^{m \times n}$. 
Let $S_1 \subset \llbracket 1, m \rrbracket$ and $S_2 \subset \llbracket 1, n\rrbracket$ be two sets of integers.
$A(S_1, :)$ is the submatrix of $A$ formed by the rows whose indices belong to
$S_1$ and $A(:, S_2)$ is the submatrix of $A$ formed by the columns whose indices belong to $S_2$.
The matrix $A(S_1,S_2)$ is formed by taking the rows whose indices belong
to~$S_1$ and only retaining the columns whose indices belong to $S_2$.
The concatenation
of any two sets of integers $S_1$ and $S_2$ is represented by $[S_1, S_2]$. Note that the order of the concatenation is
important. The set of the first $p$ positive integers is denoted by $\llbracket 1,p\rrbracket$. 
The identity matrix of size $n$ is denoted by $I_n$.
We denote by $ker(A)$ and $range(A)$  the null space and the range of $A$, respectively.

\section{Introduction to domain decomposition}
\label{sec:dd}
Throughout this section, we assume that $C$ is a general $n \times n$ sparse SPD matrix.
Let the nodes $V$ in the corresponding adjacency graph ${\cal G}(C)$ be numbered from $1$ to $n$. 
A graph partitioning algorithm can be used to split $V$ into $N \ll n$ disjoint subsets $\part{Ii}$ ($1\le i \le N$)
of size $\nomega{Ii}$. These sets are called  nonoverlapping subdomains.
Defining an {overlapping} additive Schwarz preconditioner requires overlapping subdomains.
Let $\part{\Gamma i}$ be the subset of size  $\nomega{\Gamma i}$ of  nodes that are distance one in ${\cal G}(C)$ from 
the nodes in $\part{Ii}$  ($1\le i \le N$).
The overlapping subdomain $\part{i}$ is defined to be $\part{i}=[\part{Ii}, \part{\Gamma i}]$,
with size $\nomega{i} = \nomega{\Gamma i} + \nomega{Ii}$.

Associated with $\Omega_{i}$ is a  restriction (or projection) matrix 
$R_{i}\in \mathbb{R}^{n_i \times n}$ given by
$R_i = I_n(\part{i},:)$.
$R_i$ maps from the global domain to subdomain $\part{i}$. Its transpose
$R_{i}^\top$ is a prolongation matrix that maps from subdomain $\part{i}$ to the global domain.
The {\em one-level additive Schwarz preconditioner} \cite{DolJN15} is defined to be
\begin{equation}
  \label{eq:one_level_schwarz}
  \schwarz{ASM} = \sum_{i=1}^N \rest{i} C_{ii}^{-1} \res{i},  \hspace{0.5cm} C_{ii} = \res{i} C \rest{i}.
\end{equation}
That is,
\begin{displaymath}
  \schwarz{ASM} = \Int{1}
  \begin{pmatrix}
    C_{11}^{-1}\\ & \ddots \\ & & C_{NN}^{-1}
  \end{pmatrix}
  \Int{1}^\top, 
\end{displaymath}
where $\Int{1}$ is the one-level interpolation operator defined by
\begin{align*}
		\begin{split}
      \Int{1} \ \colon \	&	\prod_{i = 1}^N \mathbb{R}^{\nomega{i}}	 \to 				\mathbb{R}^{n}							\\
													&	\left( u_i \right)_{ 1 \leq i \leq N}	 \mapsto 		\sum_{i = 1}^N \rest{i} u_i.
		\end{split}
\end{align*}
Applying this preconditioner to a vector involves solving concurrent local
problems in the overlapping subdomains.
Increasing $N$ reduces the sizes $n_i$ of the overlapping subdomains, 
leading to smaller local problems and faster computations.
However, in practice, the preconditioned system using $\schwarz{ASM}$ may not be well-conditioned, inhibiting convergence
of the iterative solver. In fact, the local nature of this preconditioner can lead to a deterioration 
in its effectiveness as the number of subdomains increases because of the lack of global 
information from the matrix~$C$~\cite{DolJN15,GanL17}.
To maintain robustness with respect to $N$, 
an artificial subdomain is added 
to the preconditioner (also known as second-level correction or coarse correction) that includes global information.

Let $0 < n_0 \ll n$. If $\res{0} \in \mathbb{R}^{n_0\times n}$ is of full row rank, 
the {\em two-level additive Schwarz preconditioner} \cite{DolJN15} is defined to be 
\begin{equation}
  \label{eq:two_level_schwarz}
  \schwarz{additive} = \sum_{i=0}^N \rest{i} C_{ii}^{-1} \res{i} = \rest{0} C_{00}^{-1} \res{0} + \schwarz{ASM}, \hspace{0.5cm} C_{00} = \res{0} C \rest{0}.
\end{equation}
That is,
\begin{displaymath}
  \schwarz{additive} = \Int{2}
  \begin{pmatrix}
    C_{00}^{-1}\\ & C_{11}^{-1}\\ & &  \ddots \\ & & & C_{NN}^{-1}
  \end{pmatrix}
  \Int{2}^\top, 
\end{displaymath}
where $\Int{2}$ is the two-level interpolation operator
\begin{align}
				\label{eq:interpolation2}
				\begin{split}
          \Int{2} \ \colon \	\prod_{i = 0}^N \mathbb{R}^{\nomega{i}}	& \to						\mathbb{R}^{n}							\\
													\left( u_i \right)_{ 0 \leq i \leq N}			& \mapsto 			\sum_{i = 0}^N \rest{i} u_i.
				\end{split}
\end{align}
In the rest of this paper, we will make use of the canonical 
one-to-one correspondence between $\prod_{i = 0}^N \mathbb{R}^{\nomega{i}}$ 
and $\mathbb{R}^{\sum_{i = 0}^N \nomega{i}}$ so that $\Int{2}$ can be applied to vectors in $\mathbb{R}^{\sum_{i = 0}^N \nomega{i}}$.
Observe that, because $C$ and $\res{0}$ are of full rank, $C_{00}$ is also of full rank.
For any full rank $R_0$, it is possible
to cheaply obtain upper bounds on the largest eigenvalue 
of the preconditioned matrix, independently of $n$ and $N$ \cite{AldG19}.
However, bounding the smallest eigenvalue is highly dependent on  $R_0$.
Thus, the choice of $R_0$ is key to 
obtaining a well-conditioned system and building efficient two-level Schwarz preconditioners.
Two-level Schwarz preconditioners have been used to solve a large class of systems arising from
a range of engineering applications (see, for example,
\cite{HeiHK20,JolRZ21,KonC17,MarCJNT20,SmiBG96,SpiDHNPS14,VanSG09} and references therein).


Following \cite{AldG19}, we denote by $D_i \in \mathbb{R}^{n_i\times n_i}$ ($1\le i \le N$) 
any non-negative diagonal matrices such that
\begin{equation} \label{eq:partition unity}
  \sum_{i=1}^N \rest{i} D_{i} \res{i} = I_n.
\end{equation}
We refer to $\left(D_i\right)_{1 \le i \le N}$ as an \emph{algebraic partition of unity}.
In \cite{AldG19}, Al Daas and Grigori show how to select local subspaces $\Z{i} \in \R^{n_i \times p_i}$ 
with $p_i \ll n_i$ ($1 \le i \le N$)   
such that, if $\rest{0}$ is defined to be $\rest{0} = [\rest{1}D_1\Z{1},  \ldots,  \rest{N}D_N\Z{N}]$,
the spectral condition number of the preconditioned matrix $\schwarz{additive} C$ is bounded from above
independently of $N$ and $n$.

\subsection{Algebraic local SPSD splitting of an SPD matrix}
We now recall the definition of an algebraic local SPSD splitting of an SPD matrix
given in \cite{AldG19}. This requires some additional notation.
Denote the complement of $\part{i}$ in $\llbracket 1, n\rrbracket$ by $\part{\text{c}i}$. 
Define   restriction matrices $\res{\text{c}i}$,
$\res{Ii}$, and $\res{\Gamma i}$ that map from the global domain to $\part{\text{c}i}$, $\part{Ii}$, and $\part{\Gamma i}$, respectively.
Reordering the matrix $C$ using the permutation matrix $P_i = I_n([\part{Ii}, \part{\Gamma i}, \part{\text{c}i}], :)$ gives
the block tridiagonal matrix
\begin{equation}
  \label{eq:permutedB}
    P_i C P_i^\top = \begin{pmatrix} C_{I,i} & C_{I\Gamma,i} & \\ C_{\Gamma I,i} & C_{\Gamma, i} & C_{\Gamma \text{c}, i} \\ & C_{\text{c}\Gamma,i} & C_{\text{c},i} \end{pmatrix},
\end{equation}
where $C_{I,i} = \res{Ii} C \rest{Ii}$, $C_{\Gamma I,i}^\top = C_{I\Gamma,i} = \res{Ii} C \rest{\Gamma i}$, 
$C_{\Gamma, i} = \res{\Gamma i} C \rest{\Gamma i}$, $C_{\text{c}\Gamma,i}^\top = C_{\Gamma \text{c},i} = \res{\Gamma i} C \rest{\text{c}i}$, and $C_{\text{c},i} = \res{\text{c}i} C \rest{\text{c}i}$.
The first 
block on the diagonal corresponds to the nodes in $\part{Ii}$, the second  block on the diagonal
corresponds to the nodes in $\part{\Gamma_i}$, and the third block on the diagonal
is associated with the remaining nodes.

An {\em algebraic  local SPSD splitting} of the SPD matrix $C$ with respect to the $i$-th  subdomain is defined to be any 
SPSD matrix $\widetilde{C}_i \in \R^{n \times n}$ of the form
\begin{equation*}
  P_i \widetilde{C}_i P_i^\top = \begin{pmatrix} C_{I,i} & C_{I\Gamma,i} & 0  \\ 
  C_{\Gamma I,i} & \widetilde{C}_{\Gamma, i}  & 0 \\ 0 & 0 & 0 \end{pmatrix}
\end{equation*}
such that the following condition holds:
\begin{equation*}
  0 \le u^\top \widetilde{C}_i u \le u^\top C u, \text{\quad for all } u \in \R^n.
\end{equation*}
We denote the $2\times 2$ block nonzero matrix of $P_i \widetilde{C}_i P_i^\top$ by $\widetilde{C}_{ii}$ so that 
$$\widetilde{C}_i = \rest{i} \widetilde{C}_{ii} \res{i}.$$
Associated with the local SPSD splitting matrices, we define a multiplicity constant~$k_m$ that satisfies the inequality
\begin{equation}
  \label{eq:sum C-tildeC_ge_0}
  0 \le \sum_{i=1}^N u^\top \widetilde{C}_i u \le k_m u^\top C u, \text{\quad for all } u \in \R^n.
\end{equation}
Note that, for any set of SPSD splitting matrices, $k_m \le N$.

The main motivation for defining splitting matrices is to 
find local seminorms that are bounded from above by the $C$-norm.
These seminorms will be used to determine a subspace that contains 
the eigenvectors of $C$ associated with its smallest eigenvalues.

\subsection{Two-level Schwarz method}
\label{sec:two-level schwarz}
We next review the abstract theory of the two-level Schwarz method as presented in \cite{AldG19}.
For the sake of completeness, we present some elementary lemmas that are widely used in multilevel methods.
These will be used in proving efficiency of the two-level Schwarz preconditioner and will also help in understanding
how the preconditioner is constructed.

\subsubsection{Useful lemmas}
The following lemma  \cite{Nep91} provides a unified framework for bounding the spectral condition 
number of a preconditioned operator. It can be found in
different forms for finite and infinite dimensional spaces. Here, we follow the presentation from \cite[Lemma 7.4]{DolJN15}.
\begin{lemma}[Fictitious Subspace Lemma]
				\label{lemma:fictitious_lemma}
				Let $C \in \mathbb{R}^{n_{C} \times n_{C}}$ and $B \in \mathbb{R}^{n_{B} \times n_{B}}$ be SPD. 
        Let the operator $\Inte{}$ be  defined as
				\begin{align*}
								\begin{split}
                  \Int{} \ \colon \ \mathbb{R}^{n_B} & \to			 \mathbb{R}^{n_C} \\
                                            v								&	\mapsto  \Int{} v,
								\end{split}
				\end{align*}
				
        \noindent and let $\Inte{}^\top$ be its transpose.
				Assume the following conditions hold:
				\begin{description}
          \item(i) $\Inte{}$ is surjective;

					\item(ii) there exists $c_u > 0$ such that for all $v \in \mathbb{R}^{n_B}$ 
						\begin{equation*}
                    \left( \Int{} v \right)^\top C \left( \Int{} v \right) \leq c_u  v^\top B v;
						\end{equation*}

          \item(iii) there exists $c_l > 0$ such that  for all $ v_{C} \in \mathbb{R}^{n_C}$ 
          there exists $v_{B} \in \mathbb{R}^{n_B}$ such that $v_{C} = \Inte{} v_{B}$ and
						\begin{equation*}
                    c_l  v_{B}^\top B v_{B} \leq \left( \Inte{} v_{B} \right)^\top C \left( \Inte{} v_{B} \right) = v_{C}^\top C v_{C}.
						\end{equation*}
				\end{description}
        Then, the spectrum of the operator $ \Inte{} B^{-1} \Inte{}^\top C$ is contained in the interval $[c_l, c_u]$.
\end{lemma}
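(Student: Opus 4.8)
The plan is to exploit that the preconditioned operator $\Int{} B^{-1}\Int{}^\top C$ is self-adjoint with respect to the inner product $\langle u,v\rangle_C = u^\top C v$, so that its spectrum is real and governed by a single generalized Rayleigh quotient. Writing $H = \Int{} B^{-1}\Int{}^\top$, surjectivity of $\Int{}$ (condition~(i)) makes $\Int{}^\top$ injective, hence $H$ is SPD and invertible, and $\langle HC u,v\rangle_C = u^\top C H C v = \langle u, HC v\rangle_C$. The eigenvalues of $HC$ therefore coincide with those of the symmetric generalized eigenproblem $C v_C = \lambda H^{-1} v_C$ and lie in
\begin{equation*}
  \left[\ \min_{v_C \neq 0}\frac{v_C^\top C v_C}{v_C^\top H^{-1} v_C},\ \ \max_{v_C \neq 0}\frac{v_C^\top C v_C}{v_C^\top H^{-1} v_C}\ \right].
\end{equation*}
It thus suffices to bound this quotient below by $c_l$ and above by $c_u$.

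The crux is a variational identity for $H^{-1}$: for every $v_C \in \mathbb{R}^{n_C}$,
\begin{equation*}
  v_C^\top H^{-1} v_C = \min_{v_B\,:\,\Int{} v_B = v_C} v_B^\top B v_B .
\end{equation*}
I would prove this by solving the equality-constrained minimization of $v_B^\top B v_B$ subject to $\Int{} v_B = v_C$ with a Lagrange multiplier $\lambda$: stationarity gives $v_B = B^{-1}\Int{}^\top\lambda$, the constraint forces $\lambda = H^{-1}v_C$, and back-substitution yields the minimal value $v_C^\top H^{-1} v_C$. Surjectivity of $\Int{}$ guarantees the feasible set is nonempty for every $v_C$, so the minimum is attained. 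This identity is the main obstacle; everything downstream is routine rewriting once it is in hand.

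Substituting the identity, and using $v_C^\top C v_C > 0$ together with the minimum appearing in the denominator, each eigenvalue of $HC$ equals a value of
\begin{equation*}
  \frac{v_C^\top C v_C}{\min_{\Int{} v_B = v_C} v_B^\top B v_B}
  = \max_{v_B\,:\,\Int{} v_B = v_C} \frac{(\Int{} v_B)^\top C (\Int{} v_B)}{v_B^\top B v_B}.
\end{equation*}
For the upper bound, I would observe that maximizing over all $v_C$ and all their preimages is the same as maximizing over all $v_B$ with $\Int{} v_B \neq 0$; by condition~(ii) every such quotient is at most $c_u$, so $\lambda_{\max}(HC) \le c_u$. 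For the lower bound, I would fix an arbitrary $v_C \neq 0$ and invoke condition~(iii), which supplies a \emph{particular} preimage $v_B$ satisfying $\Int{} v_B = v_C$ and $c_l\, v_B^\top B v_B \le (\Int{} v_B)^\top C (\Int{} v_B)$; since the inner maximum dominates this single feasible choice, the Rayleigh quotient at $v_C$ is at least $c_l$, whence $\lambda_{\min}(HC) \ge c_l$. Combining the two bounds places the spectrum in $[c_l,c_u]$, as claimed.
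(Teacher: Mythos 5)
Your proof is correct. The paper never proves this lemma itself---it is the classical fictitious space lemma, quoted with citations to \cite{Nep91} and \cite[Lemma~7.4]{DolJN15}---and your argument ($C$-self-adjointness of $\Inte{}B^{-1}\Inte{}^\top C$, the variational identity $v_C^\top \left(\Inte{}B^{-1}\Inte{}^\top\right)^{-1} v_C = \min_{\Inte{}v_B = v_C} v_B^\top B v_B$ established via Lagrange multipliers, and then conditions \emph{(ii)} and \emph{(iii)} for the upper and lower spectral bounds) is essentially the standard proof given in that cited reference, with all the key steps (injectivity of $\Inte{}^\top$, attainment of the constrained minimum, and the max-over-preimages rewriting of the Rayleigh quotient) correctly justified.
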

The challenge is to define 
the second-level projection matrix $\res{0}$ such that the two-level additive Schwarz preconditioner $\schwarz{additive}$ and the 
operator $\Int{2}$ \cref{eq:interpolation2}, corresponding respectively to $B$ and $\Int{}$ in \cref{lemma:fictitious_lemma}, 
satisfy  conditions {\em (i)} to {\em (iii)} and, in addition, ensures the ratio between $c_l$ and $c_u$ is small
because this determines the quality of the preconditioner.

As shown in \cite[Lemmas 7.10 and 7.11]{DolJN15}, a two-level additive Schwarz preconditioner 
satisfies {\em (i)} and {\em (ii)} for any full rank $R_0$.
Furthermore, the constant~$c_u$ is bounded from above independently of the number of subdomains $N$,
as shown in the following result \cite[Theorem 12]{ChaM1994}.
\begin{lemma}
				\label{lemma:ASM_upper_bound}
				Let $k_c$ be the minimum number of distinct colours 
				so that the spaces spanned by the columns of the matrices $\rest{1}, \ldots, \rest{N}$
				that are of the same colour are mutually $C$-orthogonal. 
				Then,
				\begin{equation*}
                \left( \Inte{2} u_\mathcal{B} \right)^\top C \left( \Inte{2} u_\mathcal{B} \right) \leq  
                (k_c + 1) \ \sum_{i = 0}^{N} u_i^\top C_{ii} u_i, 
				\end{equation*}
for all $ u_\mathcal{B} = \left( u_i \right)_{0 \leq i \leq N} \in \prod_{i = 0}^N \mathbb{R}^{n_{i}}$.
\end{lemma}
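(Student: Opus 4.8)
The plan is to bound the quadratic form of $\Inte{2} u_\mathcal{B} = \sum_{i=0}^N \rest{i} u_i$ by grouping its $N+1$ summands into exactly $k_c+1$ bundles and then applying a standard convexity inequality to the bundled sum. Writing $w_i = \rest{i} u_i$, so that $\Inte{2} u_\mathcal{B} = \sum_{i=0}^N w_i$, I would first single out the coarse contribution $w_0$ as a bundle on its own. By the definition of $k_c$, the remaining indices $\llbracket 1, N \rrbracket$ can be partitioned into $k_c$ colour classes $\mathcal{C}_1, \dots, \mathcal{C}_{k_c}$ such that, within any one class, the ranges of the $\rest{i}$ are mutually $C$-orthogonal. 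Setting $z_c = \sum_{i \in \mathcal{C}_c} w_i$ for each colour $c$, we obtain $\Inte{2} u_\mathcal{B} = w_0 + \sum_{c=1}^{k_c} z_c$, a sum of exactly $k_c + 1$ vectors.

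The decisive algebraic step is to exploit the $C$-orthogonality inside each colour class: since $w_i^\top C w_j = 0$ whenever $i \neq j$ lie in the same class, all cross terms vanish and
\[
  z_c^\top C z_c = \sum_{i \in \mathcal{C}_c} w_i^\top C w_i = \sum_{i \in \mathcal{C}_c} u_i^\top C_{ii} u_i,
\]
where the last equality uses $w_i^\top C w_i = u_i^\top \res{i} C \rest{i} u_i = u_i^\top C_{ii} u_i$. I would then apply the elementary bound $\big(\sum_{k=1}^p v_k\big)^\top C \big(\sum_{k=1}^p v_k\big) \le p \sum_{k=1}^p v_k^\top C v_k$ --- which follows from the triangle inequality for the $C$-seminorm together with the Cauchy--Schwarz inequality --- to the $k_c+1$ vectors $w_0, z_1, \dots, z_{k_c}$, taking $p = k_c + 1$.

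Reassembling completes the argument. The coarse term contributes $w_0^\top C w_0 = u_0^\top C_{00} u_0$, and summing the colour contributions gives $\sum_{c=1}^{k_c} z_c^\top C z_c = \sum_{i=1}^N u_i^\top C_{ii} u_i$ because the classes partition $\llbracket 1, N \rrbracket$. Hence
\[
  \left( \Inte{2} u_\mathcal{B} \right)^\top C \left( \Inte{2} u_\mathcal{B} \right) \le (k_c + 1) \Big( u_0^\top C_{00} u_0 + \sum_{i=1}^N u_i^\top C_{ii} u_i \Big) = (k_c + 1) \sum_{i=0}^N u_i^\top C_{ii} u_i,
\]
which is the claimed inequality.

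I expect the proof itself to be short; the one point requiring care is the bookkeeping of the grouping. The conceptual content lies entirely in the choice to treat the coarse space as a separate bundle --- this is what produces the ``$+1$'' in $k_c + 1$ --- combined with the observation that the colouring hypothesis is precisely what annihilates the within-bundle cross terms, reducing the estimate to a sum of $k_c+1$ colour vectors on which only the generic convexity inequality is needed. A minor preliminary is to note that a valid $k_c$-colouring covering $\llbracket 1, N \rrbracket$ exists by the very definition of $k_c$, so that no cross terms between same-coloured subdomains survive.
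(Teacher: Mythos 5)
Your proof is correct. Note that the paper does not prove this lemma itself: it quotes it as a known result, citing Theorem 12 of Chan and Mathew \cite{ChaM1994}, and your argument---separating the coarse term $w_0$ as its own bundle, collapsing each colour class into a single vector $z_c$ via the $C$-orthogonality of same-coloured subspaces, and then applying the Cauchy--Schwarz/convexity bound to the resulting $k_c+1$ vectors---is precisely the standard proof of that cited result, so there is nothing to fault and no divergence of method to report.
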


\medskip
Note that $k_c$  is independent of $N$. Indeed, it  depends only on the sparsity structure of $C$ and 
is less than the maximum number of neighbouring subdomains.

\medskip
The following result is the first step in a three-step approach to define a two-level additive Schwarz operator $\Inte{2}$
that satisfies  condition {\em (iii)} in \cref{lemma:fictitious_lemma}.
\begin{lemma}{\cite[Lemma 7.12]{DolJN15}}
				\label{lemma:B_upper_bounded}
        Let  $u_\mathcal{B} = \left( u_i \right)_{ 0 \leq i \leq N}  \in \prod_{i = 0}^N \mathbb{R}^{n_{i}}$ 
        and $u = \Inte{2} u_{\mathcal{B}} \in \mathbb{R}^{n}$.
        Then, provided $\res{0}$ is of full rank,
				\begin{equation*}
								\sum_{i = 0}^{N} u_i^\top C_{ii} u_i \leq  2 \ u^\top C u + \left( 2 k_c + 1 \right)  \sum_{i = 1}^N u_i^\top C_{ii} u_i,
				\end{equation*}
				where $k_c$ is defined in \cref{lemma:ASM_upper_bound}.
\end{lemma}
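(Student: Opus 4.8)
The plan is to exploit the fact that the fine-subdomain terms $\sum_{i=1}^N u_i^\top C_{ii} u_i$ appear on both sides of the claimed inequality, so that it suffices to bound the single coarse term $u_0^\top C_{00} u_0$. Writing $u' = \sum_{i=1}^N \rest{i} u_i$, the definition of $\Inte{2}$ gives $u = \rest{0} u_0 + u'$, hence $\rest{0} u_0 = u - u'$, and since $C_{00} = \res{0} C \rest{0}$,
\begin{equation*}
  u_0^\top C_{00} u_0 = (\rest{0} u_0)^\top C (\rest{0} u_0) = (u - u')^\top C (u - u').
\end{equation*}
Introducing $\|v\|_C = (v^\top C v)^{1/2}$ (a genuine norm since $C$ is SPD), the triangle inequality followed by the elementary bound $(a + b)^2 \le 2a^2 + 2b^2$ yields
\begin{equation*}
  u_0^\top C_{00} u_0 = \|u - u'\|_C^2 \le \left( \|u\|_C + \|u'\|_C \right)^2 \le 2\,u^\top C u + 2\,\|u'\|_C^2 .
\end{equation*}

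Next I would bound $\|u'\|_C^2$ by the colouring argument underlying \cref{lemma:ASM_upper_bound}. By definition of $k_c$ the index set $\llbracket 1, N\rrbracket$ splits into $k_c$ colour classes within which the spaces spanned by the columns of the $\rest{i}$ are mutually $C$-orthogonal; summing $u'$ colour by colour, using that the cross terms $\res{i} C \rest{j}$ vanish inside a class (so that each class contributes exactly $\sum_{i} u_i^\top C_{ii} u_i$), and applying the discrete Cauchy--Schwarz inequality $\left( \sum_{k=1}^{k_c} a_k \right)^2 \le k_c \sum_{k=1}^{k_c} a_k^2$ gives
\begin{equation*}
  \|u'\|_C^2 \le k_c \sum_{i = 1}^N u_i^\top C_{ii} u_i .
\end{equation*}
The one point requiring care is that the relevant constant here is $k_c$ and not $k_c + 1$: the ``$+1$'' in \cref{lemma:ASM_upper_bound} accounts for the coarse block $\rest{0}$ as an extra colour, whereas $u'$ involves only the fine blocks, so invoking that lemma verbatim (with $u_0 = 0$) would lose the sharp constant. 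For this reason I would reproduce the colouring estimate directly for $u'$ rather than cite the lemma.

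Combining the two displays bounds the coarse term by $2\,u^\top C u + 2 k_c \sum_{i=1}^N u_i^\top C_{ii} u_i$, and adding back the fine terms $\sum_{i=1}^N u_i^\top C_{ii} u_i$ common to both sides gives exactly
\begin{equation*}
  \sum_{i=0}^N u_i^\top C_{ii} u_i \le 2\,u^\top C u + (2 k_c + 1) \sum_{i=1}^N u_i^\top C_{ii} u_i .
\end{equation*}
The full-rank hypothesis on $\res{0}$ is used only to guarantee that $C_{00}$ is SPD, so that $u_0^\top C_{00} u_0$ is a meaningful nonnegative quantity; the estimate itself rests solely on the algebraic identity $\rest{0} u_0 = u - u'$ and the colouring bound. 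The main (and essentially the only) obstacle is establishing the sharp colouring constant $k_c$ for $u'$, which is routine but is precisely where the sparsity structure of $C$ enters through the mutual $C$-orthogonality of same-coloured subdomains.
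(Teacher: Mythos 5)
Your proof is correct and takes the same route as the source: the paper does not prove \cref{lemma:B_upper_bounded} itself but quotes it from \cite[Lemma 7.12]{DolJN15}, and the argument there is precisely yours --- isolate the coarse term via $\rest{0}u_0 = u - \sum_{i=1}^N \rest{i}u_i$, apply the triangle inequality together with $(a+b)^2 \le 2a^2 + 2b^2$, and bound $\bigl(\sum_{i=1}^N \rest{i}u_i\bigr)^\top C \bigl(\sum_{i=1}^N \rest{i}u_i\bigr)$ by $k_c\sum_{i=1}^N u_i^\top C_{ii}u_i$ using the colouring with the sharp constant $k_c$ rather than $k_c+1$. Your observation that invoking \cref{lemma:ASM_upper_bound} verbatim would degrade the constant (to $2k_c+3$) is exactly the point that yields the stated $2k_c+1$.
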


It follows that {\em (iii)} is satisfied if the squared 
localized seminorm $u_i^\top C_{ii} u_i$  is bounded from above by the squared $C$-norm of $u$.

\medskip
In the second step, we  bound $u_i^\top C_{ii} u_i$ 
by the squared localized seminorm defined by the SPSD splitting matrix $\widetilde{C}_i$, 
which can be bounded by the squared $C$-norm \cref{eq:sum C-tildeC_ge_0}.
  The decomposition of $u = \sum_{i=0}^N \rest{i} u_i \in \R^n$ is termed {\em stable} if, for some $\tau > 0$,
  \begin{equation*}
    \tau u_i^\top C_{ii} u_i \le u^\top C u, \qquad 1 \le i \le N.
  \end{equation*}
  The two-level approach in \cite{AldG19} aims to decompose each 
$\R^{\nomega{i}}$ ($1 \le i \le N$) into two subspaces, 
  one that makes the decomposition of $u$ stable and the other is part of the artificial subdomain
  associated with the second level of the preconditioner.
  Given the partition of unity \cref{eq:partition unity}, $u = \sum_{i=1}^N \rest{i} D_i \res{i} u$ and, if
  $\Pi_i = \Pi_i^\top\in \R^{\nomega{i} \times \nomega{i}}$ , we can write 
  \begin{align*}
    u &= \sum_{i=1}^N \rest{i} D_i (I_{\nomega{i}} - \Pi_i) \res{i} u + \sum_{i=1}^N \rest{i} D_i \Pi_i \res{i} u\\
      &= \sum_{i=1}^N \rest{i} u_i + \sum_{i=1}^N \rest{i} D_i \Pi_i \res{i} u, 
      \quad \mbox{with } u_i = D_i \left( I_{n_{i}} - {\Pi}_{i} \right) \res{i} u.
  \end{align*}
  Therefore, we need to construct $\Pi_i$ such that
  \begin{equation*}
    \tau u^\top \rest{i} (I_{\nomega{i}} - \Pi_i) D_i C_{ii} D_i (I_{\nomega{i}} - \Pi_i) \res{i} u \le u^\top C u.
  \end{equation*}
The following lemma shows how this can be done.
%
\begin{lemma}{\cite[Lemma 4.2]{AldG19}}
				\label{lemma:local_filtering_subspace}
        Let $\widetilde{C}_i = \rest{i}\widetilde{C}_{ii}\res{i}$ be a local SPSD splitting of $C$ 
        related to the \mbox{$i$-th} subdomain ($1 \le i \le N$). Let $D_i$ be the partition of unity \cref{eq:partition unity}.
        Let ${P}_{0,i}$ be the projection on $range(\widetilde{C}_{ii})$ parallel to $ker (\widetilde{C}_{ii})$.
        Define $L_i = ker( D_i C_{ii} D_i) \cap ker(\widetilde{C}_{ii})$, and let $L_i^{\perp}$ denote the orthogonal complementary of 
        $L_i$ in $ker(\widetilde{C}_{ii})$. 
				Consider the following generalized eigenvalue problem:
				\begin{align*}
								& {\text find} \ ( v_{i,k},\lambda_{i,k} ) \ \in \mathbb{R}^{n_{i}} \times \mathbb{R} \\
                & {\text such \ that } \ P_{0,i} D_i C_{ii} D_i P_{0,i} v_{i, k} = \lambda_{i,k} \widetilde{C}_{ii} v_{i, k}.
				\end{align*}
				Given $\tau > 0$, define
				\begin{equation}
								\label{eq:local_eigenspace2}
                \mathcal{Z}_{i} = L_i^{\perp} \oplus \spn{ v_{i, k} \ | \ \lambda_{i, k} > \dfrac{1}{\tau} }
				\end{equation}
								and let  ${\Pi}_{i}$ be the orthogonal projection on $\mathcal{Z}_{i}$.
				Then, $\mathcal{Z}_{i}$ is the subspace of smallest dimension  such that for all $u \in \R^n$,
				\begin{displaymath}
          \tau u_i^\top C_{ii} u_i \leq u^\top \widetilde{C}_{i} u \le u^\top C u, 
				\end{displaymath}
				where $u_i = D_i \left( I_{n_{i}} - {\Pi}_{i} \right) \res{i} u$. 
\end{lemma}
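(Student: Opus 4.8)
The plan is to treat the two inequalities separately and then the minimality claim. The right-hand inequality $u^\top \widetilde{C}_i u \le u^\top C u$ is exactly the defining property of the algebraic local SPSD splitting, so it requires no work beyond recording $u^\top \widetilde{C}_i u = w^\top \widetilde{C}_{ii} w$ with $w = \res{i} u$. For the left-hand inequality I would first localize. Using $\widetilde{C}_i = \rest{i}\widetilde{C}_{ii}\res{i}$ and the definition $u_i = D_i(I_{n_i}-\Pi_i)\res{i}u$, we get $u_i^\top C_{ii} u_i = [(I_{n_i}-\Pi_i)w]^\top D_i C_{ii} D_i [(I_{n_i}-\Pi_i)w]$. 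Since $\res{i}$ is surjective, $w$ ranges over all of $\R^{n_i}$, so the statement reduces to the purely local estimate $\tau\,[(I_{n_i}-\Pi_i)w]^\top D_i C_{ii} D_i [(I_{n_i}-\Pi_i)w] \le w^\top \widetilde{C}_{ii} w$ for every $w \in \R^{n_i}$.

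Next I would exploit the orthogonal splitting $\R^{n_i} = range(\widetilde{C}_{ii}) \oplus ker(\widetilde{C}_{ii})$ and the further decomposition $ker(\widetilde{C}_{ii}) = L_i^\perp \oplus L_i$. On $L_i = ker(D_i C_{ii} D_i)\cap ker(\widetilde{C}_{ii})$ both quadratic forms vanish, so those directions are harmless. The directions in $L_i^\perp$ lie in $ker(\widetilde{C}_{ii})$ (so the right-hand form is zero) but not in $ker(D_i C_{ii} D_i)$ (so the left-hand form is positive); the only way to preserve the inequality is to annihilate them, which is precisely why $L_i^\perp$ is placed in $\mathcal{Z}_{i}$ and removed by $I_{n_i}-\Pi_i$. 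With the kernel disposed of, I would use $P_{0,i}$ to restrict the left-hand form to $range(\widetilde{C}_{ii})$, where $\widetilde{C}_{ii}$ is genuinely positive definite.

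On $range(\widetilde{C}_{ii})$ the eigenproblem $P_{0,i} D_i C_{ii} D_i P_{0,i}\,v_{i,k} = \lambda_{i,k}\widetilde{C}_{ii}\,v_{i,k}$ is a standard symmetric-definite problem: it has a $\widetilde{C}_{ii}$-orthonormal eigenbasis with real nonnegative eigenvalues that simultaneously diagonalises both forms. Expanding the reduced component of $w$ in this basis, the part associated with $\lambda_{i,k} > 1/\tau$ is exactly what is placed in $\mathcal{Z}_{i}$ and removed, while the surviving component lives in the span of the eigenvectors with $\lambda_{i,k} \le 1/\tau$, on which the Rayleigh quotient of $D_i C_{ii} D_i$ relative to $\widetilde{C}_{ii}$ is at most $1/\tau$. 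Combining this threshold bound with the fact that the projection does not enlarge the $\widetilde{C}_{ii}$-seminorm then gives the desired estimate. Minimality follows from a Courant--Fischer argument: if any eigenvector with $\lambda_{i,k} > 1/\tau$ were dropped from $\mathcal{Z}_{i}$, then choosing $w$ along that now-surviving mode would produce a Rayleigh quotient exceeding $1/\tau$ and violate the bound, so no subspace of smaller dimension can work.

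The step I expect to be most delicate is the interplay between the projection $\Pi_i$ and the $\widetilde{C}_{ii}$-orthogonal eigenstructure. Because $\widetilde{C}_{ii}$ is only semidefinite, its eigenvectors are orthonormal in the $\widetilde{C}_{ii}$ geometry rather than the Euclidean one, so I must verify that $I_{n_i}-\Pi_i$ really removes the retained high-frequency modes in the sense compatible with that geometry, and in particular that the final passage from the surviving component back to $w^\top \widetilde{C}_{ii} w$ is legitimate. Getting this bookkeeping among $range(\widetilde{C}_{ii})$, $L_i^\perp$, and $L_i$ exactly right---rather than the eigenvalue estimate itself, which is routine once the decomposition is in place---is the crux of the proof.
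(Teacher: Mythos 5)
A preliminary remark: the paper itself contains no proof of this lemma---it is quoted, with attribution, from \cite[Lemma 4.2]{AldG19}---so your proposal can only be measured against the argument in that reference, which is the standard GenEO-type argument (cf.\ \cite[Lemma 7.7]{DolJN15}). Your skeleton does match it: localization via surjectivity of $\res{i}$, disposal of $L_i$ and $L_i^{\perp}$, diagonalization of the pencil on $range(\widetilde{C}_{ii})$, and a min--max argument for minimality.

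The gap sits exactly at the step you flag as delicate and then leave unverified, and it is fatal to the route as you describe it. You assert that after applying $I_{n_i}-\Pi_i$ ``the surviving component lives in the span of the eigenvectors with $\lambda_{i,k}\le 1/\tau$.'' For a \emph{Euclidean} orthogonal projection $\Pi_i$ this is false: the eigenvectors of the pencil $\left(P_{0,i}D_iC_{ii}D_iP_{0,i},\,\widetilde{C}_{ii}\right)$ are $\widetilde{C}_{ii}$-orthogonal across distinct eigenvalues, not Euclidean-orthogonal, so Euclidean orthogonality of $(I_{n_i}-\Pi_i)w$ to $\mathcal{Z}_i$ does not annihilate its expansion coefficients along the high modes, and the Rayleigh-quotient bound cannot be invoked for it. Indeed the inequality itself can fail under this reading: take $n_i=2$, $\widetilde{C}_{ii}$ SPD (so $L_i=L_i^{\perp}=\{0\}$ and $P_{0,i}=I$), eigenvectors $v_1,v_2$ that are $\widetilde{C}_{ii}$-orthonormal but not Euclidean-orthogonal, eigenvalues $\lambda_1>1/\tau\ge\lambda_2$, and $w=v_2$; then $(I-\Pi_i)w=v_2-\alpha v_1$ with $\alpha=v_1^\top v_2/\|v_1\|_2^2\neq 0$, and the left-hand form equals $\lambda_1\alpha^2+\lambda_2$, which exceeds $(1/\tau)\,w^\top\widetilde{C}_{ii}w=1/\tau$ as soon as $\lambda_1$ is large---and large pencil eigenvalues with skewed eigenvectors are precisely the regime the coarse space is built for. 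What makes the lemma work, and what the proof in \cite{AldG19} relies on, is that $\Pi_i$ is the projection onto $\mathcal{Z}_i$ \emph{parallel to} the complementary space $L_i\oplus\spn{v_{i,k} \mid \lambda_{i,k}\le 1/\tau}$, i.e., the spectral projection of the pencil, which is orthogonal in the $\widetilde{C}_{ii}$- and $D_iC_{ii}D_i$-semi-inner products but not in the Euclidean one. With that projection, $(I_{n_i}-\Pi_i)w$ genuinely lies in $L_i\oplus\spn{v_{i,k}\mid\lambda_{i,k}\le 1/\tau}$, its $L_i$-part drops out of the left-hand form because $L_i\subseteq ker(D_iC_{ii}D_i)$, its low-mode coefficients agree with those of $w$, and your threshold estimate then closes the argument (your minimality sketch also goes through in this geometry). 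So the missing idea is not bookkeeping between $range(\widetilde{C}_{ii})$, $L_i^{\perp}$, and $L_i$: it is that the projection must be taken in the pencil's geometry, and any proof built on the Euclidean projection breaks at exactly the point you identified.
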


\medskip
\Cref{lemma:stable_decomposition} provides the last step that we need for condition {\em (iii)} in \cref{lemma:fictitious_lemma}.
It defines $u_0$ and checks whether $(u_i)_{0\le i \le N}$ is a stable decomposition.

\begin{lemma}
				\label{lemma:stable_decomposition}
				Let $ \widetilde{C}_i$,  $\mathcal{Z}_{i}$, and ${\Pi}_{i}$ be as in \cref{lemma:local_filtering_subspace}
				and let $Z_i$ be a matrix whose columns span $\mathcal{Z}_i$
        ($1 \le i \le N$).
        Let the columns of the matrix $\rest{0}$ span the space 
        				\begin{equation}
                 \label{eq:Z space}
          \mathcal{Z} = \bigoplus_{i = 1}^N \rest{i} D_i {Z}_i.
          \end{equation}
				Let $u \in \mathbb{R}^n$ and 
				$u_i = D_i \left( I_{n_{i}} - {\Pi}_{i} \right) \res{i} u $ ($1 \le i \le N$).
				Define
				\begin{displaymath}
								u_0 = \left( \res{0} \rest{0} \right)^{-1} \res{0} \left( \sum_{i = 1}^N \rest{i} D_i {\Pi}_{i} \res{i} u \right).
				\end{displaymath}
				Then,
				\begin{displaymath}
								u = \sum_{i = 0}^N \rest{i} u_i,
				\end{displaymath}
				and
				\begin{displaymath}
          \sum_{i = 0}^N u_i^\top C_{ii} u_i \leq  \left( 2 + (2 k_c + 1) \dfrac{k_m}{\tau} \right) u^\top C u.
				\end{displaymath}

\end{lemma}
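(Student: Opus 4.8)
The plan is to treat the two assertions separately: first the exactness of the decomposition $u = \sum_{i = 0}^N \rest{i} u_i$, and then the stability bound on $\sum_{i=0}^N u_i^\top C_{ii} u_i$.

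For the decomposition identity, I would start from the partition of unity \cref{eq:partition unity}, which gives $u = \sum_{i=1}^N \rest{i} D_i \res{i} u$, and split each term via $I_{\nomega{i}} = (I_{\nomega{i}} - \Pi_i) + \Pi_i$. The first group of terms is exactly $\sum_{i=1}^N \rest{i} u_i$ by the definition $u_i = D_i(I_{\nomega{i}} - \Pi_i)\res{i} u$. It then remains to show that $\rest{0} u_0$ reproduces the second group $w := \sum_{i=1}^N \rest{i} D_i \Pi_i \res{i} u$. The key observation is that $\Pi_i$ is the orthogonal projection onto $\mathcal{Z}_i = \spn{Z_i}$, so each summand $\rest{i} D_i \Pi_i \res{i} u$ lies in $\rest{i} D_i \mathcal{Z}_i$, whence $w \in \mathcal{Z} = \bigoplus_{i=1}^N \rest{i} D_i Z_i = range(\rest{0})$. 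Because $\res{0}$ is of full row rank, $\rest{0}(\res{0}\rest{0})^{-1}\res{0}$ is precisely the orthogonal projector onto $range(\rest{0})$; applying it to $w \in range(\rest{0})$ leaves $w$ unchanged, so $\rest{0} u_0 = w$. Adding the two groups recovers $u$.

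For the stability estimate, the plan is to chain together the three earlier lemmas. Since $u = \Inte{2} u_\mathcal{B}$ with $u_\mathcal{B} = (u_i)_{0 \le i \le N}$, \cref{lemma:B_upper_bounded} gives $\sum_{i=0}^N u_i^\top C_{ii} u_i \le 2\, u^\top C u + (2k_c+1)\sum_{i=1}^N u_i^\top C_{ii} u_i$. To control the remaining sum I would invoke \cref{lemma:local_filtering_subspace}, which—precisely because $\Pi_i$ projects onto the space $\mathcal{Z}_i$ built from the eigenvalue threshold $1/\tau$—yields $\tau\, u_i^\top C_{ii} u_i \le u^\top \widetilde{C}_i u$ for each $i$. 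Summing over $i$ and applying the multiplicity bound \cref{eq:sum C-tildeC_ge_0}, namely $\sum_{i=1}^N u^\top \widetilde{C}_i u \le k_m\, u^\top C u$, gives $\sum_{i=1}^N u_i^\top C_{ii} u_i \le (k_m/\tau)\, u^\top C u$. Substituting this back produces the claimed constant $2 + (2k_c+1)k_m/\tau$.

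I expect the only real subtlety to be the reproduction step $\rest{0} u_0 = w$: one must verify both that $w$ genuinely lands in $range(\rest{0})$ (which relies on $range(\Pi_i) = \mathcal{Z}_i$ together with the definition of $\mathcal{Z}$ as the sum of the $\rest{i} D_i Z_i$), and that $\rest{0}(\res{0}\rest{0})^{-1}\res{0}$ is the orthogonal projector onto that range (which needs the full-rank hypothesis on $\res{0}$ to guarantee $\res{0}\rest{0}$ is invertible). Everything else is a direct bookkeeping assembly of the preceding lemmas, requiring no genuinely new estimate.
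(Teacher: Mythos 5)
Your proof is correct and takes essentially the same route the paper intends: the paper states this lemma without printing a proof (deferring to \cite{AldG19}), but its surrounding three-step discussion---the partition-of-unity splitting into $D_i(I_{n_i}-\Pi_i)\res{i}u$ and $D_i\Pi_i\res{i}u$ parts, then chaining \cref{lemma:B_upper_bounded}, \cref{lemma:local_filtering_subspace}, and the multiplicity bound \cref{eq:sum C-tildeC_ge_0}---is precisely your argument, with matching constants. Your verification that $\rest{0}u_0$ reproduces $\sum_{i=1}^N \rest{i}D_i\Pi_i\res{i}u$, using that $\rest{0}\left(\res{0}\rest{0}\right)^{-1}\res{0}$ is the orthogonal projector onto $\mathrm{range}(\rest{0})=\mathcal{Z}$ and that each projected term lies in $\mathcal{Z}$, correctly fills in the one detail the paper leaves implicit.
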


\medskip
Finally, using the preceding results, \cref{th:condition_number_bounded} presents a
theoretical upper bound on the spectral condition number of the preconditioned system.
\begin{theorem}
				\label{th:condition_number_bounded}
        If the two-level additive Schwarz preconditioner $\schwarz{additive}$ \cref{eq:two_level_schwarz} is constructed 
        using $\res{0}$ as defined in \cref{lemma:stable_decomposition}, then the following inequality is satisfied:
				\begin{displaymath}
          \kappa \left( \schwarz{additive} C \right) \leq (k_c + 1) \left( 2 + (2 k_c + 1) \dfrac{k_m}{\tau} \right).
				\end{displaymath}
\end{theorem}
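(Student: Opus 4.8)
The plan is to apply the Fictitious Subspace Lemma (\cref{lemma:fictitious_lemma}) with the identifications $\Int{} = \Int{2}$ and $B = \diag(C_{00}, C_{11}, \ldots, C_{NN})$, so that $\Int{} B^{-1} \Int{}^\top = \schwarz{additive}$ and the spectrum of $\schwarz{additive} C$ is exactly the object we wish to bound. With these choices, $u_\mathcal{B}^\top B u_\mathcal{B} = \sum_{i=0}^N u_i^\top C_{ii} u_i$ for $u_\mathcal{B} = (u_i)_{0 \le i \le N}$, so the three hypotheses of the lemma translate directly into statements that have, for the most part, already been established earlier in the excerpt. The condition number is then at most $c_u/c_l$, and the whole proof reduces to reading off $c_u$ and $c_l$ from the preceding lemmas.

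First I would check that $B$ is SPD, as the lemma requires: each $C_{ii} = \res{i} C \rest{i}$ ($1 \le i \le N$) is SPD because $C$ is SPD and $\res{i}$ has full row rank, while $C_{00}$ is SPD by the observation following \cref{eq:two_level_schwarz} that full-rank $\res{0}$ forces $C_{00}$ to be full rank. Next I would verify condition \emph{(i)}, surjectivity of $\Int{2}$: since the partition of unity \cref{eq:partition unity} gives $\sum_{i=1}^N \rest{i} D_i \res{i} = I_n$, every $u \in \R^n$ already lies in the range of the one-level part of $\Int{2}$, and adjoining the extra block $\rest{0}$ only enlarges the range, so $\Int{2}$ is onto.

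For condition \emph{(ii)} I would invoke \cref{lemma:ASM_upper_bound} verbatim: it states precisely that $(\Int{2} u_\mathcal{B})^\top C (\Int{2} u_\mathcal{B}) \le (k_c+1) \sum_{i=0}^N u_i^\top C_{ii} u_i$, giving the required upper bound with $c_u = k_c + 1$. For condition \emph{(iii)} I would use \cref{lemma:stable_decomposition}: given any $u \in \R^n$, it constructs an explicit decomposition $u = \sum_{i=0}^N \rest{i} u_i = \Int{2} u_\mathcal{B}$ satisfying $\sum_{i=0}^N u_i^\top C_{ii} u_i \le \left(2 + (2k_c+1)\tfrac{k_m}{\tau}\right) u^\top C u$. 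Rearranging yields $c_l \, u_\mathcal{B}^\top B u_\mathcal{B} \le u^\top C u$ with $c_l = \left(2 + (2k_c+1)\tfrac{k_m}{\tau}\right)^{-1}$, which is exactly hypothesis \emph{(iii)}.

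With all three hypotheses verified, \cref{lemma:fictitious_lemma} places the spectrum of $\schwarz{additive} C$ in $[c_l, c_u]$, and therefore $\kappa(\schwarz{additive} C) \le c_u / c_l = (k_c+1)\left(2 + (2k_c+1)\tfrac{k_m}{\tau}\right)$, as claimed. Because the two substantive estimates are delivered wholesale by \cref{lemma:ASM_upper_bound,lemma:stable_decomposition}, I expect no genuine analytical obstacle to remain; the only points demanding care are the bookkeeping of the identification $B = \diag(C_{ii})$ together with the canonical isomorphism used to let $\Int{2}$ act on $\R^{\sum_{i=0}^N \nomega{i}}$, and confirming surjectivity so that \emph{(i)} holds. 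The essential mathematical difficulty has already been absorbed into the construction of $\mathcal{Z}_i$ and $\res{0}$ in \cref{lemma:local_filtering_subspace,lemma:stable_decomposition}.
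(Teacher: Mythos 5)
Your proposal is correct and follows exactly the route the paper intends: the theorem is the direct consequence of \cref{lemma:fictitious_lemma} applied with $\Int{}=\Int{2}$ and $B=\diag(C_{00},\ldots,C_{NN})$, with condition \emph{(ii)} supplied by \cref{lemma:ASM_upper_bound} ($c_u=k_c+1$) and condition \emph{(iii)} by \cref{lemma:stable_decomposition} ($c_l^{-1}=2+(2k_c+1)k_m/\tau$). Your verification of surjectivity via the partition of unity and of the SPD property of $B$ matches the paper's standing assumptions, so nothing is missing.
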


\subsection{Variants of the Schwarz preconditioner}
So far, we have presented~$\schwarz{ASM}$, the symmetric additive Schwarz method (ASM) 
and $\schwarz{additive}$, the additive correction for the second level.
It was noted in \cite{CaiS99} that using the partition of unity 
to weight the preconditioner can improve its quality.
The resulting preconditioner is referred to as $\schwarz{RAS}$, the {\em
restricted additive Schwarz}~(RAS) preconditioner,
and is defined to be 
\begin{equation}
  \label{eq:RAS}
  \schwarz{RAS} = \sum_{i=1}^N \rest{i} D_i C_{ii}^{-1}\res{i}.
\end{equation}
This preconditioner is nonsymmetric and thus can only be used with iterative
methods such as GMRES~\cite{SaaS86}
that are for solving  nonsymmetric problems.
With regards to the second level, different strategies  yield either a
symmetric or a nonsymmetric preconditioner \cite{TanNVE09}.
Given a first-level preconditioner $\schwarz{$\star$}$ 
and setting $Q = \rest{0} C_{00}^{-1} \res{0}$, the balanced and deflated  
two-level preconditioners are as follows
\begin{equation}
 \label{eq:balanced}
  \schwarz{balanced} = Q + (I - CQ )^\top \schwarz{$\star$} (I - CQ ),
\end{equation}
and
\begin{equation}
  \label{eq:deflated}
  \schwarz{deflated} = Q + \schwarz{$\star$} (I - CQ ),
\end{equation}
respectively.
It is well-known in the literature that
$\schwarz{balanced}$ and $\schwarz{deflated}$ yield better convergence behavior
than $\schwarz{additive}$ (see~\cite{TanNVE09} for a thorough comparison).
Although the theory we present relies on $\schwarz{additive}$, in practice we will
use $\schwarz{balanced}$ and $\schwarz{deflated}$. 
If the one-level preconditioner $\schwarz{$\star$}$
is symmetric, then so is $\schwarz{balanced}$, while $\schwarz{deflated}$ is
typically nonsymmetric. 
For this reason, in the rest of the paper, we always
couple~$\schwarz{ASM}$ with  $\schwarz{balanced}$, and $\schwarz{RAS}$ with
$\schwarz{deflated}$. All three
variants have the same setup cost, and only differ in how the second
level is applied. $\schwarz{balanced}$ is slightly more expensive because
two second-level corrections (multiplications by $Q$) are required 
instead of a single one for $\schwarz{additive}$ and $\schwarz{deflated}$.

\section{The normal equations}
\label{sec:normal}
The theory explained thus far is fully algebraic but somewhat
disconnected from our initial LS problem~\cref{eq:Ax-b}.
We now show how it can be readily applied 
to the normal equations matrix $C = A^\top A$, with $A\in \R^{m \times n}$ sparse,
first  defining a one-level Schwarz preconditioner, and then a robust algebraic second-level correction.
We start by partitioning the $n$ columns of $A$ into disjoint subsets $\part{Ii}$.
Let $\rows{i}$ be the set of indices of the nonzero rows in $A(:, \part{Ii})$  and let
$\rows{\text{c}i}$ be the complement of $\rows{i}$ in the set $\llbracket 1, m\rrbracket$. 
Now define $\part{\Gamma i}$ to be the complement of $\part{Ii}$ in the set of indices of nonzero columns of $A(\rows{i},:)$. 
The set $\part{i} = [\part{Ii}, \part{\Gamma i}]$ defines the $i$-th overlapping subdomain
and we have the permuted matrix
\begin{equation}
  \label{eq:reorderedA}
    A([\rows{i}, \rows{\text{c}i}], [\part{Ii}, \part{\Gamma i}, \part{\text{c}i}]) = 
    \begin{pmatrix}A_{I,i} & A_{I\Gamma,i} & \\ & A_{\Gamma,i} & A_{\text{c},i} \end{pmatrix}.
\end{equation}
To illustrate the concepts and notation, consider the $5 \times 4$ matrix  
\begin{equation*}
  A = \begin{pmatrix}
    1 & 0 & 6 & 0\\  
    2 & 4 & 0 & 0\\
    3 & 0 & 0 & 0\\
    0 & 5 & 0 & 7\\
    0 & 0 & 0 & 8
 \end{pmatrix}
\end{equation*}
 and set $N=2$, $\Omega_{I1} = \{1, 3\}$, $\Omega_{I2} = \{2,4\}$.
Consider the first subdomain. We have
\begin{equation*}
  A(:, \Omega_{I1}) = \begin{pmatrix}
    1 & 6\\  
    2 & 0\\
    3 & 0\\
    0 & 0\\
    0 & 0
 \end{pmatrix}.
\end{equation*}
The set of indices of the nonzero rows is
$\rows{1}     = \{1, 2, 3\}$, 
and its complement is
$\rows{\text{c}1} = \{4, 5\}$.
To define $\Omega_{\Gamma,1}$,  select the nonzero columns in the submatrix $A(\rows{1}, :  )$ and 
remove those already in $\Omega_{I1}$, that is,
\begin{equation}\label{eq:rows1}
  A(\rows{1}, :  ) = \begin{pmatrix}
    1 & 0 & 6 & 0\\  
    2 & 4 & 0 & 0\\
    3 & 0 & 0 & 0
 \end{pmatrix},
\end{equation}
so that $\Omega_{\Gamma1} = \{2\}$ and $\Omega_{\text{c}1} = \{4\}$.
Permuting $A$ to the form  \cref{eq:reorderedA} gives
\begin{equation*}
    A([\rows{1}, \rows{\text{c}1}], [\Omega_{I1}, \Omega_{\Gamma1}, \Omega_{\text{c}1}] )  = \begin{pmatrix}
    1 & 6 & 0 & 0\\  
    2 & 0 & 4 & 0\\
    3 & 0 & 0 & 0\\
    0 & 0 & 5 & 7\\
    0 & 0 & 0 & 8
 \end{pmatrix}.
\end{equation*}
In the same way, consider  the second subdomain.
$\Omega_{I2} = \{2,4\}$ and
\begin{equation*}
  A(:, \Omega_{I2}) = \begin{pmatrix}
    0 & 0\\  
    4 & 0\\
    0 & 0\\
    5 & 7\\
    0 & 8
 \end{pmatrix},
\end{equation*}
so that $\rows{2} = \{2, 4, 5\}$ and $\rows{\text{c}2} = \{1, 3\}$.
To define $\Omega_{\Gamma2}$, select the nonzero columns in the submatrix $A(\rows{2}, :  )$ 
and remove those already in $\Omega_{I2}$, that is,
\begin{equation}\label{eq:rows2}
  A(\rows{2}, :  ) = \begin{pmatrix}
    2 & 4 & 0 & 0\\
    0 & 5 & 0 & 7\\
    0 & 0 & 0 & 8
 \end{pmatrix},
\end{equation}
which gives $\Omega_{\Gamma2} = \{1\}$ and  $\Omega_{\text{c}2} = \{3\}$.
Permuting $A$ to the form  \cref{eq:reorderedA} gives
\begin{equation*}
    A([\rows{2}, \rows{\text{c}2}], [\Omega_{I2}, \Omega_{\Gamma2}, \Omega_{\text{c}2}] ) 
  = \begin{pmatrix}
     4 &    0  &   2 &    0\\
     5 &    7  &   0 &    0\\
     0 &    8  &   0 &    0\\
     0 &    0  &   1 &    6\\
     0 &    0  &   3 &    0
 \end{pmatrix}.
\end{equation*}
Now that we have $\Omega_{Ii}$ and $\Omega_{\Gamma i}$, we can define the restriction operators 
\begin{equation*}
  \res{1} = I_4(\Omega_1,:) = \begin{pmatrix} 1 & 0 & 0 & 0 \\ 0 & 0 & 1 & 0 \\ 0 & 1 & 0 & 0 \end{pmatrix}, \quad 
  \res{2} = I_4(\Omega_2,:) = \begin{pmatrix} 0 & 1 & 0 & 0 \\ 0 & 0 & 0 & 1 \\ 1 & 0 & 0 & 0 \end{pmatrix}.
\end{equation*}
For our example, $n_{I1} = n_{I2}=2$ and $n_{\Gamma 1} = n_{\Gamma 2} = 1$.
The partition of unity matrices~$D_i$ are of dimension $(n_{Ii} + n_{\Gamma i})
\times (n_{Ii} + n_{\Gamma i})$ ($i = 1,2$) and have ones on the $n_{Ii}$
leading diagonal entries and zeros elsewhere, so that
\begin{equation}
  \label{eq:partition_of_unity}
  D_1 = D_2 = \begin{pmatrix} 1 & 0 & 0 \\ 0 & 1 & 0 \\ 0 & 0 & 0 \end{pmatrix}.
\end{equation}
Observe that $D_i(k,k)$ scales the columns $A(:, \Omega_i(k))$.

Note that it is possible to obtain the partitioning sets and the 
sets of indices using the normal equations matrix $C$.
Most graph partitioners, especially those that are implemented in parallel,
require an undirected graph (corresponding to a square matrix with a symmetric sparsity pattern). Therefore, in practice,
we use the graph of $C$ to setup the first-level preconditioner for LS problems.

\subsection{One-level DD for the normal equations}
This section presents the one-level additive Schwarz preconditioner for the normal equations matrix $C= A^\top A$.
Following \cref{eq:one_level_schwarz} and 
given the sets $\Omega_{Ii}, \Omega_{\Gamma i},$ and $\rows{i}$, the 
one-level Schwarz preconditioner of $C = A^\top A$ is
\begin{align*}
  \begin{split}
    \schwarz{ASM} &= \sum_{i=1}^N \rest{i}\left(\res{i} A^\top A\rest{i} \right)^{-1} \res{i},\\
                  &= \sum_{i=1}^N \rest{i}\left( A(:,\Omega_i)^\top A(:,\Omega_i) \right)^{-1} \res{i},\\
  \end{split}
\end{align*}
\begin{remark}
  \label{remark:no_explicit_form_1}
Note that the local matrix $C_{ii} = A(:,\Omega_i)^\top A(:,\Omega_i)$ need not be computed explicitly to be factored.
Instead, the Cholesky factor of $C_{ii}$ can be computed by using a ``thin'' QR factorization of $A(:,\Omega_i)$.
\end{remark}

\subsection{Algebraic local SPSD splitting of the normal equations matrix}
In this section, we show how to cheaply construct algebraic local SPSD splittings for sparse 
matrices of the form $C=A^\top A$.
Combining \cref{eq:permutedB} and \cref{eq:reorderedA}, we can write
\begin{equation*}
    P_i A^\top A P_i^\top  = \begin{pmatrix} A_{I,i}^\top A_{I,i} & A_{I,i}^\top A_{I\Gamma,i} & \\ A_{I\Gamma,i}^\top A_{I,i} & A_{I\Gamma, i}^\top A_{I\Gamma, i} + A_{\Gamma,i}^\top A_{\Gamma,i} & A_{\Gamma,i}^\top A_{\text{c}, i} \\ & A_{\text{c},i}^\top A_{\Gamma,i} & A_{\text{c},i}^\top A_{\text{c},i} \end{pmatrix},
\end{equation*}
where $P_i = I_n([\Omega_{Ii},\Omega_{\Gamma i}, \Omega_{\text{c}i}], :)$ is a permutation matrix.
A straightforward splitting of $P_i A^\top A P_i^\top$ is given by
\begin{align*}
  \begin{split}
    P_i A^\top A P_i^\top  &= \begin{pmatrix} A_{I,i}^\top A_{I,i} & A_{I,i}^\top A_{I\Gamma,i} & 0 \\ 
    A_{I\Gamma,i}^\top A_{I,i} & A_{I\Gamma, i}^\top A_{I\Gamma, i} & 0 \\ 0 & 0 & 0 \end{pmatrix}
      +  \begin{pmatrix} 0 & 0 &0 \\ 0 & A_{\Gamma,i}^\top A_{\Gamma,i} & A_{\Gamma,i}^\top A_{\text{c}, i} \\ 
      0 & A_{\text{c},i}^\top A_{\Gamma,i} & A_{\text{c},i}^\top A_{\text{c},i} \end{pmatrix}.\\
  \end{split}
\end{align*}
It is clear that both summands are SPSD.
Indeed, they both have the form $X^\top X$, where $X$ is 
$\begin{pmatrix} A_{I,i} & A_{I\Gamma,i} & 0\end{pmatrix}$ and $\begin{pmatrix} 0 & A_{\Gamma,i} & A_{\text{c},i} \end{pmatrix}$, respectively. 
The local SPSD splitting matrix related to the $i$-th subdomain is then defined as:
\begin{align}
  \label{eq:spsd_normal_eq}
  \begin{split}
    \widetilde{C}_{ii} &= A(\rows{i}, \Omega_i)^\top A(\rows{i}, \Omega_i) = \begin{pmatrix} A_{I,i} & A_{I\Gamma,i} \end{pmatrix}^\top \begin{pmatrix} A_{I,i} & A_{I\Gamma,i} \end{pmatrix},\\
  \end{split}
\end{align}
and 
$$\widetilde{C}_{i} = \rest{i} \widetilde{C}_{ii} \res{i} = A(\rows{i}, :)^\top A(\rows{i}, :).$$
Hence, the theory presented in \cite{AldG19} 
and summarised in \cref{sec:two-level schwarz} is applicable.
In particular, the two-level Schwarz preconditioner $\schwarz{additive}$ \cref{eq:two_level_schwarz} satisfies 
%
$$\kappa(\schwarz{additive} C) \le (k_c + 1) \left(2 + 2(k_c+1)\dfrac{k_m}{\tau}\right), $$
where $k_c$ is the minimal number of colours required to colour the partitions of $C$ such that 
each two neighbouring subdomains have different colours, and $k_m$ is the multiplicity constant that satisfies the following inequality
\begin{equation*}
  \sum_{i=1}^N \rest{i} \widetilde{C}_{ii} \res{i} \le k_m C.
\end{equation*}
The constant $k_c$ is independent of $N$ and depends only on the graph $\mathcal{G}(C)$, which is determined by the sparsity pattern of $A$. 
The multiplicity constant $k_m$ depends on the local SPSD splitting matrices.
For the normal equations matrix, the following lemma provides an upper bound on $k_m$.
\begin{lemma}
  \label{lemma:k_m}
    Let $C = A^\top A$. Let $m_j$ be the number of subdomains such that $A(j, \Omega_{Ii}) \neq 0$ ($1\leq i \leq N$),
  that is, 
\begin{equation*}
  m_j = \#\{i \ | \ j \in \rows{i}\}.
\end{equation*}
  Then, $k_m$ can be chosen to be $k_m = \max_{1\le j \le m} m_j$.
    Furthermore, if $k_{\Omega_i}$ is the number of neighbouring subdomains of the \mbox{$i$-th} subdomain,
  that is,
  $$k_{\Omega_i} = \#\{j \ | \ \Omega_i \cap \Omega_j \neq \phi \},$$ 
  then
  \begin{equation*}
   \label{eq:k_m_lemma}
  k_m = \max_{1\le j \le m} m_j \le \max_{1 \le i \le N} k_{\Omega_i}.
  \end{equation*}
\end{lemma}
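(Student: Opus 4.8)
The plan is to prove both assertions by working with the rank-one decomposition of $C$ into its rows. First I would write $C = A^\top A = \sum_{j=1}^m A(j,:)^\top A(j,:)$ as a sum of $m$ SPSD rank-one matrices. Since $\widetilde{C}_i = A(\rows{i},:)^\top A(\rows{i},:) = \sum_{j \in \rows{i}} A(j,:)^\top A(j,:)$, swapping the order of summation gives
\begin{equation*}
\sum_{i=1}^N \widetilde{C}_i = \sum_{i=1}^N \sum_{j \in \rows{i}} A(j,:)^\top A(j,:) = \sum_{j=1}^m m_j\, A(j,:)^\top A(j,:),
\end{equation*}
because, for a fixed row index $j$, the inner condition $j \in \rows{i}$ holds for exactly $m_j$ values of $i$ by the definition of $m_j$.

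For the first claim I would evaluate the associated quadratic form: for any $u \in \R^n$,
\begin{equation*}
u^\top \Big(\sum_{i=1}^N \widetilde{C}_i\Big) u = \sum_{j=1}^m m_j\, \big(A(j,:)\,u\big)^2 .
\end{equation*}
Every summand is non-negative, so replacing each weight $m_j$ by $\max_{1\le j\le m} m_j$ and recognizing $\sum_{j=1}^m (A(j,:)u)^2 = u^\top C u$ immediately yields $\sum_{i=1}^N u^\top \widetilde{C}_i u \le (\max_j m_j)\, u^\top C u$ for all $u$. By \cref{eq:sum C-tildeC_ge_0} this establishes that $k_m = \max_{1\le j\le m} m_j$ is an admissible multiplicity constant.

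For the second claim, the key observation I would record is that the construction of $\Omega_{\Gamma i}$ makes $\Omega_i = [\Omega_{Ii}, \Omega_{\Gamma i}]$ precisely the set of indices of the nonzero columns of $A(\rows{i},:)$. Consequently, if $j \in \rows{i}$ then the support of the row $A(j,:)$ is contained in $\Omega_i$, and this support is nonempty since $j \in \rows{i}$ forces $A(j,\Omega_{Ii}) \neq 0$. Now fix a row index $j$ attaining $m_j = \max_j m_j$ and set $S = \{i \mid j \in \rows{i}\}$, so that $|S| = m_j$. Choosing any column $c$ in the support of $A(j,:)$, the observation gives $c \in \Omega_i$ for every $i \in S$; hence all subdomains in $S$ share the column $c$ and therefore pairwise overlap. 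Fixing any $i_0 \in S$, every $i \in S$ satisfies $\Omega_{i_0} \cap \Omega_i \neq \emptyset$, so $S \subseteq \{i \mid \Omega_{i_0}\cap\Omega_i \neq \emptyset\}$ and thus $m_j = |S| \le k_{\Omega_{i_0}} \le \max_{1\le i\le N} k_{\Omega_i}$, which is the claimed bound.

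The only genuinely subtle point, and the one I expect to require the most care, is the second-claim observation that $j \in \rows{i}$ implies $\mathrm{supp}(A(j,:)) \subseteq \Omega_i$; this is where the precise definition of the overlap set $\Omega_{\Gamma i}$ as the remaining nonzero columns of $A(\rows{i},:)$ is used. Once it is in place, the pairwise-overlap argument is immediate. Everything else reduces to the elementary fact that a non-negatively weighted sum of rank-one SPSD matrices is dominated, in the Loewner order, by the uniformly weighted sum carrying the largest weight.
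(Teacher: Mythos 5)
Your proof is correct and follows essentially the same route as the paper's: the first claim is established by the identical row-wise rank-one decomposition $C=\sum_{j=1}^m A(j,:)^\top A(j,:)$ with a multiplicity count, and the second by the same overlap argument. In fact, your explicit observation that $j \in \rows{i}$ implies $\mathrm{supp}\bigl(A(j,:)\bigr) \subseteq \Omega_i$, so that all subdomains whose row set contains $j$ share a common column and hence pairwise intersect, supplies precisely the justification that the paper's terse final step ($m_l \le \max_{p} k_{\Omega_{i_p}}$) leaves implicit.
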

\begin{proof}
  Since $C = A^\top A$ and $\widetilde{C}_i = A(\rows{i}, :)^\top A(\rows{i}, :)$,  
  we have
  \begin{align*}
    u^\top C u &= \sum_{j=1}^m u^\top A(j, :)^\top A(j, :) u,\\
    u^\top \widetilde{C}_i u &= \sum_{j \in \rows{i}} u^\top A(j, :)^\top A(j, :) u,\\
    \sum_{i = 1}^N  u^\top \widetilde{C}_i u &= \sum_{i = 1}^N \sum_{j \in \rows{i}} u^\top A(j, :)^\top A(j, :) u.
  \end{align*}
  From the definition of $m_j$, the term $u^\top A(j, :)^\top A(j, :) u$ appears $m_j$ times in the last equation. Thus,
  \begin{align*}
    \sum_{i = 1}^N  u^\top \widetilde{C}_i u &= \sum_{j = 1}^m m_j u^\top A(j, :)^\top A(j, :) u,\\
                                             &\le \max_{1\le j \le m} m_j  \sum_{j = 1}^m u^\top A(j, :)^\top A(j, :) u,\\
                                             &= \max_{1\le j \le m} m_j (u^\top C u),
  \end{align*}
from which it follows that we can choose $k_m = \max_{1\le j \le m} m_j$.
Now, if $1 \le l \le m$, there exist $i_1, \ldots, i_{m_l}$ such that $l \in \rows{i_1} \cap \cdots \cap \rows{i_{m_l}}$.
Furthermore, $m_l \le \max_{1 \le p \le l} k_{\Omega_{i_p}}$.
Taking the maximum over $l$ on both sides, we obtain
  \begin{equation} \tag*{~}
    k_m \le \max_{1 \le i \le N} k_{\Omega_{i}}.
  \end{equation}
\end{proof}
Note that because $A$ is sparse, $k_m$ is independent of the number of subdomains.
\subsection{Algorithms and technical details}
In this section, we discuss the technical details involved in constructing a two-level preconditioner 
for the normal equations matrix.

\subsubsection{Partition of unity}
\label{sec:partition of unity}
  Because the matrix $A_{I \Gamma,i}$ may be of low rank, the null space of $\widetilde{C}_{ii}$ 
  \cref{eq:spsd_normal_eq} can be large.
  Recall that the diagonal matrices $D_i$ have dimension $n_i = n_{Ii}+n_{\Gamma i}$.
  Choosing the  entries in positions $n_{Ii}+1, \ldots, n_{i}$ 
  of the diagonal of $D_i$ to be zero, as in \cref{eq:partition_of_unity},
  results in the subspace of $ker(\widetilde{C}_{ii})$ caused by the rank deficiency of $A_{I \Gamma,i}$ to lie 
  within $ker(D_iC_{ii}D_i)$, reducing the size of the space $\mathcal{Z}$ given by \cref{eq:Z space}.
  In other words, if $A_{I \Gamma,i} u  = 0$, we have $ \widetilde{C}_{ii}v = 0$, 
  where $v^\top = (0, u^\top)$, i.e., $v\in ker(\widetilde{C}_{ii})$ and because
  by construction $D_i v = 0$, we have $v \in ker(\widetilde{C}_{ii}) \cap ker(D_i C_{ii} D_i)$, therefore, 
  $v$ need not be included in $\mathcal{Z}_i$.
\subsubsection{The eigenvalue problem}
The generalized eigenvalue problem presented in \cref{lemma:local_filtering_subspace} 
is critical in the construction of the two-level preconditioner.
Although the definition of $\mathcal{Z}_i$ from \cref{eq:local_eigenspace2} suggests 
it is necessary to compute the null space of $\widetilde{C}_{ii}$ and that of $D_i C_{ii} D_i$ and their intersection,
in practice, this can be avoided.
Consider the generalized eigenvalue problem
\begin{equation}
  \label{eq:gevp}
  D_i C_{ii} D_i v = \lambda \widetilde{C}_{ii} v,
\end{equation}
where, by convention, we set $\lambda = 0$ if 
$v \in ker(\widetilde{C}_{ii}) \cap ker(D_i C_{ii} D_i)$ and $\lambda = \infty$ if $v \in ker(\widetilde{C}_{ii}) \setminus ker(D_i C_{ii} D_i)$.
The subspace $\mathcal{Z}_i$ defined in \cref{eq:local_eigenspace2} can then be written as
\begin{equation*}
  \spn{v \ | \ D_i C_{ii} D_i v = \lambda \widetilde{C}_{ii} v \text{ and } \lambda > \dfrac{1}{\tau}}.
\end{equation*}
Consider also the shifted generalized eigenvalue problem
\begin{equation}
  \label{eq:modified_gevp}
  D_i C_{ii} D_i v = \lambda (\widetilde{C}_{ii} + s  I_{n_i}) v,
\end{equation}
where $0 < s \ll 1$.
Note that if $s$ is such that $\widetilde{C}_{ii} + s  I_{n_i}$ is numerically of full rank, 
\cref{eq:modified_gevp} can be solved using any off-the-shelf generalized eigenproblem solver.
Let $(v,\lambda)$ be an eigenpair of \cref{eq:modified_gevp}. Then, we can only have one of the following situations:
\begin{itemize}
  \item $v \in range(\widetilde{C}_{ii}) \cap ker(D_i C_{ii} D_i)$ or $v \in ker(\widetilde{C}_{ii}) \cap ker(D_i C_{ii} D_i)$.
    In which case, $(v, 0)$ is an eigenpair of \cref{eq:gevp}.

  \item $v \in range(\widetilde{C}_{ii}) \cap range(D_i C_{ii} D_i)$. Then,
\[
  \frac{\| D_i C_{ii} D_i v - \lambda \widetilde{C}_{ii} v \|_2}{\lambda \| v\|_2} = s,
\]
and, as $s$ is small, $(v, \lambda)$ is a good approximation of an eigenpair of \cref{eq:gevp} corresponding to a finite eigenvalue.

  \item $v \in ker(\widetilde{C}_{ii}) \cap range(D_i C_{ii} D_i)$. Then,
$D_i C_{ii} D_i v = \lambda s v$, i.e., $\lambda s$ is a nonzero eigenvalue of $D_i C_{ii} D_i$.
Because $D_i$ is defined such that the diagonal values corresponding to the boundary nodes are zero, 
the nonzero eigenvalues of $D_i C_{ii} D_i$ correspond to the squared singular values of $A(:, \Omega_{Ii})$.
Hence, all the eigenpairs of \cref{eq:gevp} corresponding to an infinite eigenvalue are included 
in the set of eigenpairs $(v,\lambda)$ of \cref{eq:modified_gevp} such that  
\begin{equation}
  \label{eq:inequality lambda s}
  \sigma^2_{\text{min}}\left(A(:, \Omega_{Ii})\right) \le \lambda s \le \sigma^2_{\text{max}}\left(A(:, \Omega_{Ii})\right),
\end{equation}
where $\sigma_{\text{min}}\left(A(:, \Omega_{Ii})\right)$ 
and $\sigma_{\text{max}}\left(A(:, \Omega_{Ii})\right)$ are the smallest and largest singular values of $A(:, \Omega_{Ii})$, respectively.
\end{itemize}
\medskip \noindent
Therefore, choosing $$s = O(\|\widetilde{C}_{ii}\|_2 \varepsilon),$$ where $\varepsilon$ 
is the machine precision, ensures $\widetilde{C}_{ii} + s  I_{n_i}$ is numerically invertible and $s \ll 1$.
Setting $s = \|\widetilde{C}_{ii}\|_2 \varepsilon$ in \cref{eq:inequality lambda s}, we obtain
\[
  \sigma^2_{\text{min}}\left(A(:, \Omega_{Ii})\right) \le \lambda \|\widetilde{C}_{ii}\|_2 \varepsilon \le \sigma^2_{\text{max}}\left(A(:, \Omega_{Ii})\right).
\]
By \cref{eq:spsd_normal_eq}, we have $$\|\widetilde{C}_{ii}\|_2 \le \| C_{ii} \|_2,$$ 
and 
because $\Omega_{Ii}\subset \Omega_i$, it follows that
$$\|C_{ii}^{-1}\|_2 = \|\left(A(:, \Omega_{i})^\top A(:, \Omega_{i}) \right)^{-1}\|_2 \le \sigma^2_{\text{min}}\left(A(:, \Omega_{Ii})\right).$$
Hence, if $(v,\lambda)$ is an eigenpair of \cref{eq:modified_gevp} with $v \in ker(\widetilde{C}_{ii}) \cap range(D_i C_{ii} D_i)$,
then
\[
  (\kappa(C_{ii}) \varepsilon)^{-1} \le \lambda,
\]
where $\kappa(C_{ii})$ is the condition number of $C_{ii}$
and $\mathcal{Z}_i$ can be defined to be
\begin{equation}
  \label{eq:in practice Zi}
  \spn{v \ | \ D_i C_{ii} D_i v = \lambda (\widetilde{C}_{ii} + \varepsilon \|\widetilde{C}_{ii}\|_2  I_{n_i}) v 
  \text{ and } \lambda \ge \min\left(\dfrac{1}{\tau}, (\kappa(C_{ii}) \varepsilon)^{-1}\right)}.
\end{equation}
$Z_i$ is then taken to be the matrix whose columns are the vertical concatenation of corresponding eigenvectors.

\begin{remark}
  \label{remark:no_explicit_form_2}
Note that solving the generalized eigenvalue problem \cref{eq:modified_gevp} by an iterative method such as 
Krylov--Schur \cite{Ste02} does not require the explicit form of $C_{ii}$ and~$\widetilde{C}_{ii}$. Rather, it
requires solving linear systems of the form $(\widetilde{C}_{ii} + s I_{n_i}) u = v$,
together with matrix--vector products
of the form $(\widetilde{C}_{ii} + s I_{n_i}) v$ and $C_{ii} v$.
It is clear that these products do not require 
the matrices $\widetilde{C}_{ii}$ and $C_{ii}$ to be formed.
Regarding the solution of the linear system $(\widetilde{C}_{ii} + s I_{n_i}) u = v$, 
\cref{remark:no_explicit_form_1} also applies to 
the Cholesky factorization of $\widetilde{C}_{ii} + s I_{n_i} = X^\top X$, 
where $X^\top = \begin{pmatrix} A(\rows{i}, \Omega_i)^\top  & \sqrt{s} I_{n_i}\end{pmatrix}$,
that can be computed by using a ``thin'' QR factorization of~$X$.
\end{remark}

From \cref{remark:no_explicit_form_1,remark:no_explicit_form_2}, and applying the same technique
therein to factor $C_{00} = \res{0} C \rest{0} = (A\rest{0})^\top (A\rest{0})$, we observe that 
given the overlapping partitions of $A$, the proposed two-level preconditioner can be constructed
without forming the normal equations matrix. Algorithm~\ref{alg:2lvl} gives an overview of the 
steps for constructing our two-level Schwarz preconditioner for the normal equations matrix. The
actual implementation of our proposed preconditioner will be discussed in greater detail 
in~\cref{sec:implementation}.
\begin{algorithm}
  \caption{Two-level Schwarz preconditioner  for the normal equations matrix.}
  \label{alg:2lvl}
  \begin{algorithmic}[1]
    \Require{matrix $A$, number of subdomains $N$, threshold $\tau$ to bound the condition number.}
    \Ensure{two-level preconditioner $\schwarz{}$ for $C=A^\top A$.}
      \State{$(\Omega_{I1}, \ldots, \Omega_{IN})=\text{Partition}(A,N)$}
    \For{$i=1$ to $N$ in parallel}
      \State{$\rows{i} = \text{FindNonzeroRows}(A(:, \Omega_{Ii}))$}
    \State{$\Omega_i = [\Omega_{Ii},\Omega_{\Gamma i}] = \text{FindNonzeroColumns}(A(\rows{i}, :))$}
    \State{Define $D_i$  as in \cref{sec:partition of unity} and $R_i$ as in \cref{sec:dd}}
    \State{Perform Cholesky factorization of $C_{ii} = A(:, \Omega_i)^\top A(:, \Omega_i)$, see~\cref{remark:no_explicit_form_1}}
    \State{Perform Cholesky factorization of $\widetilde{C}_{ii} = A(\rows{i}, \Omega_i)^\top A(\rows{i}, \Omega_i)$, 
    possibly using a small shift $s$,  see~\cref{remark:no_explicit_form_2}}
    \State{Compute $Z_i$ as defined in \cref{eq:in practice Zi}}
    \EndFor
    \State{Set $\rest{0} = \left[\rest{1}D_1 Z_1, \ldots, \rest{N}D_N Z_N\right]$}
    \State{Perform Cholesky factorization of  $C_{00} = (A\rest{0})^\top (A\rest{0})$}
    \State{Set $\schwarz{} = \schwarz{additive} = \sum_{i=0}^N \rest{i} C_{ii}^{-1} \res{i}$ or $\schwarz{balanced}$~\cref{eq:balanced} or $\schwarz{deflated}$~\cref{eq:deflated}}
  \end{algorithmic}
\end{algorithm}

\section{Numerical experiments}
\label{sec:numerical_experiments}
In this section, we illustrate the effectiveness of the new two-level
LS preconditioners $\schwarz{balanced}$ and $\schwarz{deflated}$, their robustness
with respect to the number of subdomains, 
and their efficiency in tackling large-scale sparse and ill-conditioned 
LS problems selected from the SuiteSparse Matrix Collection \cite{DavH11}. 
The test matrices are listed in \cref{tab:data_set}. 
For each matrix, we report its dimensions, the number of entries in $A$ and in 
the normal equations matrix $C$, and the condition number of $C$ (estimated using the MATLAB function {\tt condest}).

\pgfplotstableread{table.dat}\loadedtable
\begin{table}
  \caption{Test matrices taken from the SuiteSparse Matrix Collection.}
  \centering
  \label{tab:data_set}
\pgfplotstabletypeset[every head row/.style={before row=\hline,after row=\hline},
                      every last row/.style={after row=\hline},
                      every even row/.style={before row={\rowcolor[gray]{0.9}}},
                      columns={name,m,n,A-nnz,C-nnz,condest},sort,sort key=n,
                      columns/m/.style={int detect,column name=$m$,dec sep align},
                      columns/n/.style={int detect,column name=$n$,dec sep align},
                      columns/A-nnz/.style={int detect,column name=nnz($A$),dec sep align},
                      columns/C-nnz/.style={int detect,column name=nnz($C$),dec sep align},
                      columns/condest/.style={column name=condest($C$),precision=1,sci zerofill},
                      columns/SPQR/.style={string type,column name=SPQR,string replace={KO}{\ding{55}},string replace={OK}{\ding{51}}},
                      columns/name/.style={string type},display columns/0/.style={column name=Identifier, column type={l|}}]\loadedtable
\end{table}

In \cref{sec:implementation}, we  discuss  our implementation based on the
parallel backend~\cite{PETSc}. 
In particular, we show that very little coding effort is needed to 
construct all the necessary algebraic tools, and that it is possible to 
take advantage of
an existing package, such as HPDDM~\cite{JolRZ21}, to setup the
new preconditioners efficiently.
We then show in \cref{sec:comparison} how $\schwarz{balanced}$  and
$\schwarz{deflated}$ perform compared to 
other preconditioners when solving challenging LS problems.
The preconditioners we consider are:
\begin{itemize}
  \item limited memory incomplete Cholesky (IC) factorization specialized for 
  the normal equations matrix as implemented in {\tt HSL\_MI35} from the HSL library~\cite{HSL}
  (note that this package is written in Fortran and we run it using the supplied MATLAB interface with default 
  parameter settings);
  \item one-level overlapping Schwarz methods $\schwarz{ASM}$ and $\schwarz{RAS}$ as implemented in PETSc;
  \item algebraic multigrid methods as implemented both in BoomerAMG from the HYPRE library~\cite{FalY02} and in GAMG~\cite{AdaBKP04} from PETSc.
\end{itemize}
Finally, in \cref{sec:scalability}, we study  the strong scalability of $\schwarz{balanced}$
and its robustness with respect to the number of subdomains by using a fixed problem and increasing the number of subdomains.

With the exception of the serial IC code {\tt HSL\_MI35}, all the numerical experiments are performed on
Irène, a system composed of \pgfmathprintnumber[assume math mode=true]{2292}
nodes with two 64-core AMD Rome processors clocked at
\SI{2.6}{\giga\hertz} and, 
unless stated otherwise, 256 MPI processes are used. For the domain decomposition methods, one subdomain is assigned per process. {All computations are performed
in double-precision arithmetic.}

In all our experiments, the vector $b$ in~\cref{eq:Ax-b} is generated randomly and the initial guess 
for the iterative solver is zero.
When constructing our new two-level 
preconditioners, with the exception of the results presented in \cref{fig:convergence}, 
at most 300 eigenpairs are computed on each subdomain  and 
the threshold parameter $\tau$ from~\cref{eq:in practice Zi} is set to~0.6. 
These parameters were found to provide good numerical performance after a very
quick trial-and-error approach on a single problem.  We did not want to adjust
them for each problem from~\cref{tab:data_set}, but it will be shown next that
they are fine overall without additional tuning.

\subsection{Implementation aspects}\label{sec:implementation}
The new two-level preconditioners are implemented on top of the well-known distributed memory library PETSc. 
This section is not aimed at PETSc specialists. Rather, we want 
to briefly explain what was needed to provide an efficient yet concise implementation. 
Our new code is open-source, available at \url{https://github.com/prj-/aldaas2021robust}.
It comprises fewer than 150 lines of code (including the initialization and error analysis). 
The main source files, written in Fortran, C, and Python, have three major phases, which we now outline.

\subsubsection{Loading and partitioning phase}
First, PETSc is used to load the matrix $A$ in parallel, following a contiguous one-dimensional 
row partitioning among MPI processes. We explicitly assemble  the normal equations matrix using 
the routine MatTransposeMatMult~\cite{MccSZ15}. The initial PETSc-enforced parallel 
decomposition of $A$ among processes may not be appropriate for the normal equations, 
so ParMETIS is used by PETSc to repartition $C$. 
This also induces a permutation of the columns of~$A$.

\subsubsection{Setup phase}
To ensure that the normal equations matrix $C$ is definite
{and its Cholesky factorization is breakdown free},  $C$ is shifted by 
$10^{-10} \|C\|_F I_n$ (here and elsewhere, $\|\!\cdot\!\|_F$ denotes the 
Frobenius norm). 
Note that this is only needed for the construction of the preconditioner; the preconditioner
is used to solve the original LS problem.
Given the indices of the columns owned by a MPI process, 
we call the routine MatIncreaseOverlap on the normal equations matrix to build an extended set of column indices 
of $A$ that will be used to define overlapping subdomains. 
These are the $\Omega_i$ as defined in~\cref{eq:reorderedA}. Using the  routine MatFindNonzeroRows,
this extended set of  indices  
is used to  concurrently find on each subdomain the set of nonzero rows. 
These are the sets $\rows{i}$ as illustrated in \cref{eq:rows1,eq:rows2}. The subdomain matrices $C_{ii}$ 
from \cref{eq:one_level_schwarz} as well as the partition of unity $D_i$ as illustrated in \cref{eq:partition_of_unity}
are automatically assembled by PETSc when using domain decomposition preconditioners such as PCASM or PCHPDDM. 
The right-hand side matrices of the generalized eigenvalue problems~\cref{eq:gevp} are assembled  
using MatTransposeMatMult, but note that this product is this 
time performed concurrently on each subdomain.
The small shift~$s$ from~\cref{eq:modified_gevp} is set to $10^{-8}\|\widetilde{C}_{ii}\|_F$.
These matrices and the sets of overlapping column indices are passed to PCHPDDM using routine PCHPDDMSetAuxiliaryMat. 
The rest of the setup is hidden from the user. It includes solving the generalized eigenvalue problems 
using SLEPc~\cite{HerRV05}, followed by the assembly and redistribution of the second-level 
operator using a Galerkin product \cref{eq:two_level_schwarz} (see~\cite{HPDDM} for more details on 
how this is performed efficiently in PCHPDDM).

\subsubsection{Solution phase}
For the solution phase, users can choose between multiple Krylov methods, including 
LSQR~\cite{PaiS82} and GMRES. {We use left-preconditioned LSQR (see, for example, \cite[Algorithm 2]{ArrBH14}) and right-preconditioned GMRES.} Each iteration of LSQR requires 
matrix--vector products with $A$ and $A^\top$. For GMRES, instead of 
using the previously explicitly assembled normal equations matrix, we use an implicit 
representation of the operator that computes the matrix--vector product with $A$ followed by the 
product with $A^\top$. The type of overlapping Schwarz method (additive or restricted additive) 
as well as the type of second-level correction (balanced or deflated) may be selected at runtime by the user. 
This flexibility is important because LSQR requires a symmetric preconditioner.

\subsection{Numerical validation}\label{sec:comparison}
In this section, we validate the effectiveness of the two-level method when compared to 
other preconditioners.
\Cref{tab:lsqr_comparison} presents a comparison between five preconditioners: two-level additive Schwarz
 with balanced coarse correction $\schwarz{balanced}$,
one-level additive Schwarz $\schwarz{ASM}$, BoomerAMG, GAMG, and {\tt HSL\_MI35}. 
The first level of the one- and two-level methods both use the additive Schwarz 
  formulation; the second level uses the balanced deflation 
  formulation \cref{eq:balanced}.  The results 
  are for the iterative solver LSQR. 
If $M$ denotes the preconditioner, LSQR terminates
when the LS residual satisfies 
$$\dfrac{\|\left(AM^{-1}\right)^\top(Ax-b)\|_2}{\|A\|_{M,F} \|Ax-b\|_2 } < 10^{-8},$$
where $\|A\|_{M,F} = \sum_{i=1}^n \lambda_i(M^{-1}A^\top A)$ is the sum of the positive eigenvalues of $M^{-1}A^\top A$
that is approximated by LSQR itself. 
Note that if $M^{-1}=W^{-1} W^{-\top}$, then $\|A\|_{M,F} = \|AW^{-1}\|_F$.

\begin{table}
  \caption{Preconditioner comparison when running LSQR.
    Iteration counts are reported. $\schwarz{ASM}$ and $\schwarz{balanced}$ are the one- and two-level
  overlapping Schwarz preconditioners, respectively.
    \dag~denotes iteration count exceeds \pgfmathprintnumber{1000}. 
   \ddag~denotes   either a failure in computing the preconditioner 
  because of memory issues or a breakdown of LSQR.}
  \label{tab:lsqr_comparison}
  \centering
\pgfplotstabletypeset[every head row/.style={before row=\hline,after row=\hline},
                      every last row/.style={after row=\hline},
                      every even row/.style={before row={\rowcolor[gray]{0.9}}},
                      columns={name,2L-LSQR,1L-LSQR,AMG-LSQR,GAMG-LSQR,MI35-LSQR},sort,sort key=n,
                      display columns/1/.style={column name={$\schwarz{balanced}$},column type={c}},
                      display columns/2/.style={column name={$\schwarz{ASM}$},column type={c}},
                      columns/AMG-LSQR/.style={string type,string replace={-2}{{\ddag}}},
                      display columns/3/.style={column name=BoomerAMG,column type={c}},
                      display columns/4/.style={column name=GAMG,column type={c}},
                      columns/GAMG-LSQR/.style={string type,string replace={-1}{{\dag}},string replace={-2}{{\ddag}}},
                      columns/MI35-LSQR/.style={column name={{\tt HSL\_MI35}},column type={c},string type,string replace={-1}{{\dag}},string replace={-2}{{\ddag}}},
                      every row 0 column 5/.style={postproc cell content/.style={@cell content=\textbf{##1}}},
                      every row 1 column 1/.style={postproc cell content/.style={@cell content=\textbf{##1}}},
                      every row 2 column 1/.style={postproc cell content/.style={@cell content=\textbf{##1}}},
                      every row 3 column 1/.style={postproc cell content/.style={@cell content=\textbf{##1}}},
                      every row 4 column 1/.style={postproc cell content/.style={@cell content=\textbf{##1}}},
                      every row 5 column 1/.style={postproc cell content/.style={@cell content=\textbf{##1}}},
                      every row 6 column 1/.style={postproc cell content/.style={@cell content=\textbf{##1}}},
                      every row 7 column 1/.style={postproc cell content/.style={@cell content=\textbf{##1}}},
                      every row 8 column 3/.style={postproc cell content/.style={@cell content=\textbf{##1}}},
                      every row 8 column 5/.style={postproc cell content/.style={@cell content=\textbf{##1}}},
                      every row 9 column 3/.style={postproc cell content/.style={@cell content=\textbf{##1}}},
                      every row 10 column 3/.style={postproc cell content/.style={@cell content=\textbf{##1}}},
                      every row 11 column 5/.style={postproc cell content/.style={@cell content=\textbf{##1}}},
                      every row 12 column 1/.style={postproc cell content/.style={@cell content=\textbf{##1}}},
                      every row 13 column 1/.style={postproc cell content/.style={@cell content=\textbf{##1}}},
                      every row 14 column 1/.style={postproc cell content/.style={@cell content=\textbf{##1}}},
                      every row 15 column 5/.style={postproc cell content/.style={@cell content=\textbf{##1}}},
                      every row 16 column 1/.style={postproc cell content/.style={@cell content=\textbf{##1}}},
                      columns/name/.style={string type},display columns/0/.style={column name=Identifier,column type={l|}}]\loadedtable
\end{table}
It is clear that both the one- and two-level Schwarz methods are more robust than 
the other preconditioners as they encounter no breakdowns and solve all 
the LS problems using fewer than \pgfmathprintnumber{1000} iterations.
Because {\tt HSL\_MI35} is a sequential code that runs on a single core, there was 
not enough memory to compute the preconditioner for problem  cont11\_l. For many of the problems,
the iteration count for {\tt HSL\_MI35} can be reduced by increasing the
parameters that determine the number of entries in the IC factor (the default
values are  rather small for the large test examples).
LSQR preconditioned with BoomerAMG  breaks down for several problems, as 
reported by PETSc error code KSP\_DIVERGED\_BREAKDOWN. GAMG is  
more robust but requires more iterations for problems 
where both algebraic multigrid solvers are successful. 
Note that even with more advanced options than the default ones set by PETSc, 
such as PMIS coarsening~\cite{DeSFNY08} with extended classical 
interpolation~\cite{DeSYH06} for BoomerAMG or Schwarz smoothing for GAMG, 
these solvers do not perform considerably better numerically.
We can also see that the two-level preconditioner  outperforms 
the one-level preconditioner consistently.

\Cref{tab:gmres_comparison} presents a similar comparison, but  using right-preconditioned GMRES  
applied directly to the normal equations~\cref{eq:normal}. A restart parameter of 100 is used. 
The relative tolerance is again set to $10^{-8}$, but this now applies to the 
unpreconditioned residual. We switch from $\schwarz{ASM}$ to $\schwarz{RAS}$ \cref{eq:RAS}, which is known to perform better numerically. 
For the two-level method, we switch from $\schwarz{balanced}$ to $\schwarz{deflated}$
 \cref{eq:deflated}.
\begin{table}
  \caption{Preconditioner comparison when running GMRES.
  Iteration counts are reported. $\schwarz{RAS}$ and $\schwarz{deflated}$ are the one- and two-level
  overlapping Schwarz preconditioners, respectively.
    \dag~denotes iteration count exceeds \pgfmathprintnumber{1000}. 
   \ddag~denotes   either a failure in computing the preconditioner 
  because of memory issues or a breakdown of GMRES.}
  \label{tab:gmres_comparison}
  \centering
\pgfplotstabletypeset[every head row/.style={before row=\hline,after row=\hline},
                      every last row/.style={after row=\hline},
                      every even row/.style={before row={\rowcolor[gray]{0.9}}},
                      columns={name,2L-GMRES,1L-GMRES,AMG-GMRES,GAMG-GMRES,MI35-GMRES},sort,sort key=n,
                      display columns/1/.style={column name={$\schwarz{deflated}$},column type={c}},
                      display columns/2/.style={column name={$\schwarz{RAS}$},column type={c}},
                      columns/AMG-GMRES/.style={column name=BoomerAMG,column type={c},string type,string replace={-1}{{\dag}},string replace={-2}{{\ddag}}},
                      columns/GAMG-GMRES/.style={column name=GAMG,column type={c},string type,string replace={-1}{{\dag}},string replace={-2}{{\ddag}}},
                      columns/1L-GMRES/.style={string type,string replace={-1}{{\dag}},string replace={-2}{{\ddag}}},
                      columns/MI35-GMRES/.style={column name={{\tt HSL\_MI35}},column type={c},string type,string replace={-1}{{\dag}},string replace={-2}{{\ddag}}},
                      every row 0 column 5/.style={postproc cell content/.style={@cell content=\textbf{##1}}},
                      every row 1 column 1/.style={postproc cell content/.style={@cell content=\textbf{##1}}},
                      every row 2 column 1/.style={postproc cell content/.style={@cell content=\textbf{##1}}},
                      every row 3 column 1/.style={postproc cell content/.style={@cell content=\textbf{##1}}},
                      every row 4 column 1/.style={postproc cell content/.style={@cell content=\textbf{##1}}},
                      every row 5 column 1/.style={postproc cell content/.style={@cell content=\textbf{##1}}},
                      every row 6 column 1/.style={postproc cell content/.style={@cell content=\textbf{##1}}},
                      every row 7 column 1/.style={postproc cell content/.style={@cell content=\textbf{##1}}},
                      every row 8 column 5/.style={postproc cell content/.style={@cell content=\textbf{##1}}},
                      every row 9 column 1/.style={postproc cell content/.style={@cell content=\textbf{##1}}},
                      every row 10 column 1/.style={postproc cell content/.style={@cell content=\textbf{##1}}},
                      every row 11 column 5/.style={postproc cell content/.style={@cell content=\textbf{##1}}},
                      every row 12 column 1/.style={postproc cell content/.style={@cell content=\textbf{##1}}},
                      every row 13 column 1/.style={postproc cell content/.style={@cell content=\textbf{##1}}},
                      every row 14 column 1/.style={postproc cell content/.style={@cell content=\textbf{##1}}},
                      every row 15 column 1/.style={postproc cell content/.style={@cell content=\textbf{##1}}},
                      every row 16 column 1/.style={postproc cell content/.style={@cell content=\textbf{##1}}},
                      columns/name/.style={string type},display columns/0/.style={column name=Identifier,column type={l|}}]\loadedtable
\end{table}
Switching from LSQR to GMRES can be beneficial for some preconditioners, 
e.g., BoomerAMG now converges in 21 iterations instead of breaking down for  
 problem mesh\_deform. But this is not always the case, 
e.g., {\tt HSL\_MI35} applied to problem deltaX does not converge
within the \pgfmathprintnumber{1000} iteration limit. The two-level 
method is the most robust approach, while the restricted additive Schwarz preconditioner  struggles to 
solve some problems, either because of a breakdown (problem stormg2-125) or 
because of  slow convergence (problems lp\_stocfor3, sgpf5y6, Hardesty2, and cont11\_l).

Recall that for the results in~\cref{tab:lsqr_comparison,tab:gmres_comparison}, the two-level 
preconditioner was constructed using at most 300 eigenpairs and the threshold parameter
$\tau$ was set to~$0.6$. 
Whilst this highlights that tuning  $\tau$ for individual problems
is not necessary to successfully solve a range of  problems,
it does not validate the ability of our preconditioner to  concurrently select
the most appropriate local eigenpairs to define an adaptive preconditioner. 
To that end, for problem watson\_2, we consider the effect on the performance of 
our two-level preconditioner of varying $\tau$. 
Results for LSQR with $\schwarz{ASM}$ and $\schwarz{balanced}$ are presented in~\cref{fig:convergence}. 
Here, 512 MPI processes are used and the convergence tolerance is again $10^{-8}$. 
We observe  that the two-level method consistently outperforms the one-level method. 
Furthermore,  as we increase $\tau$, {the iteration count
reduces} and the size $n_0$ 
of the second level increases. 
It is also interesting to highlight that the convergence is smooth even with a very small value 
$\tau = 0.01275$, $n_0 = \pgfmathprintnumber{2400}$ 
compared to the dimension  \pgfmathprintnumber{352013} of the normal equations matrix.

\pgfplotstableread{convergence.dat}\loadedconvergence
\pgfmathsetmacro{\anorm}{1.340373855230e+03}
\pgfplotstablecreatecol[create col/expr={\thisrow{N1}/(\anorm*\thisrow{R1})}]{T1}\loadedconvergence
\pgfplotstablecreatecol[create col/expr={\thisrow{N2}/(\anorm*\thisrow{R2})}]{T2}\loadedconvergence
\pgfplotstablecreatecol[create col/expr={\thisrow{N3}/(\anorm*\thisrow{R3})}]{T3}\loadedconvergence
\pgfplotstablecreatecol[create col/expr={\thisrow{N4}/(\anorm*\thisrow{R4})}]{T4}\loadedconvergence
\pgfplotstablecreatecol[create col/expr={\thisrow{N5}/(\anorm*\thisrow{R5})}]{T5}\loadedconvergence
\pgfplotstablecreatecol[create col/expr={\thisrow{N6}/(\anorm*\thisrow{R6})}]{T6}\loadedconvergence
\pgfplotstablecreatecol[create col/expr={\thisrow{N7}/(\anorm*\thisrow{R7})}]{T7}\loadedconvergence
\pgfplotstablecreatecol[create col/expr={\thisrow{N8}/(\anorm*\thisrow{R8})}]{T8}\loadedconvergence
\pgfplotstablecreatecol[create col/expr={\thisrow{N9}/(\anorm*\thisrow{R9})}]{T9}\loadedconvergence
\pgfplotstableread{size.dat}\sizetable
\begin{figure}
    \hspace*{-1cm}
    \begin{minipage}{0.71\textwidth}
\begin{tikzpicture}
\begin{semilogyaxis}[xmin=1,xmax=115,height=7cm,width=8.5cm,
    legend style={at={(0.97,0.96)},cells={anchor=west}},
    xlabel={Iteration number},
    ylabel={Relative residual},
    extra y ticks = {1e-8},
    extra y tick style = {grid=major},
    extra y tick labels={},
yticklabel={
\pgfmathparse{int(\tick)}
\ifnum\pgfmathresult=0
$1$
\else
\pgfmathparse{int(\tick/ln(10))}{$10^{\pgfmathresult}$}
\fi
},
            cycle list/RdGy-6,
            mark list fill={.!75!white},
            cycle multiindex* list={
                RdGy-6
                    \nextlist
                my marks
                    \nextlist
                [3 of]linestyles
                    \nextlist
                very thick
                    \nextlist
            },
            mark repeat=2,legend columns=1
]
\addplot+[mark repeat=9] table [x=iteration, y=T1] {\loadedconvergence};
\addplot+[mark repeat=4] table [x=iteration, y=T2] {\loadedconvergence};\label{pgfplots:tau}
\addplot+[mark repeat=3] table [x=iteration, y=T3] {\loadedconvergence};
\addplot table [x=iteration, y=T4] {\loadedconvergence};
\addplot table [x=iteration, y=T5] {\loadedconvergence};
\addplot table [x=iteration, y=T6] {\loadedconvergence};
\addplot table [x=iteration, y=T9] {\loadedconvergence};
    \legend{$\schwarz{ASM}$,$\tau=0.01275$,$\tau=0.02$,$\tau=0.05$,$\tau=0.1$,$\tau=0.4$,$\tau=0.6$}
\end{semilogyaxis}
\end{tikzpicture} 
    \end{minipage}
    \begin{minipage}{0.35\textwidth}
\pgfplotstabletypeset[every head row/.style={before row=\hline,after row=\hline},
                  every last row/.style={after row=\hline},
                  every even row/.style={before row={\rowcolor[gray]{0.9}}},
                  columns/iteration/.style={column name=Iterations},
                  columns/Nc/.style={int detect,column name=$n_0$,dec sep align},
                  columns/tau/.style={column name=$\tau$,dec sep align,fixed,precision=5},
                  columns={tau,Nc,iteration},
                  font={\footnotesize},
                  ]\sizetable
    \end{minipage}
    \caption{Influence of the threshold parameter $\tau$ on the convergence of preconditioned LSQR
    for problem watson\_2 ($m=\pgfmathprintnumber[fixed]{677224}$ and $n=\pgfmathprintnumber[fixed]{352013}$).
    \label{fig:convergence}}
\end{figure}

\subsection{Performance study}\label{sec:scalability}
We next investigate the algorithmic cost of the two-level method. 
To do so, we perform a strong scaling analysis using a large problem not 
presented in~\cref{tab:data_set} but still from the SuiteSparse 
Matrix Collection, Hardesty3. The matrix is of dimension $\pgfmathprintnumber[fixed]{8217820} \times \pgfmathprintnumber[fixed]{7591564}$, 
and the number of nonzero entries in $C$ is $\pgfmathprintnumber[fixed]{98634426}$. 
In~\cref{tab:strong-scaling}, we report the number of iterations as well as the 
eigensolve, setup, and solve times as the number $N$ of subdomains ranges from 16  
to \pgfmathprintnumber{4096}. The times are obtained using the PETSc 
 -log\_view command line option. For different~$N$, 
the reported times on each row of the table are the maximum among all processes. The setup
time includes the numerical factorization of the first-level subdomain matrices, the assembly 
of the second-level operator and its factorization. Note that the symbolic factorization of 
the first-level subdomain is shared between the domain decomposition preconditioner and 
the eigensolver because we use the Krylov--Schur method as implemented in
SLEPc, which requires the factorization of the right-hand side matrices from~\cref{eq:modified_gevp}. 
The  Cholesky factorizations of the subdomain matrices and of the second-level operator are performed 
using the sparse direct solver MUMPS~\cite{AmeDLK01}. For small numbers of subdomains ($N < 128$), 
the cost of the eigensolves are clearly prohibitive.
By increasing the number of subdomains, thus 
reducing their size, the time to construct the preconditioner becomes much more tractable and overall, our implementation 
yields good speedups on a wide range of process counts.
Note that the threshold parameter $\tau=0.6$ is not attained on any of the
subdomains for $N$ ranging from 16 up to 256, so that $n_0 = 300 \times
N$. For larger $N$, $\tau=0.6$ is attained, the
preconditioner automatically selects the appropriate eigenmodes, and
convergence improves (see column 2 of~\cref{tab:strong-scaling}).
When $N$ is large ($N \geq \pgfmathprintnumber{1024}$),
the setup and solve times are impacted 
by the high cost of factorizing and solving the second-level problems, which, as highlighted by 
the values of $n_0$, become large. Multilevel variants~\cite{AldGJT19}  
could be used to overcome this but goes beyond the scope of the
current study.

\pgfplotstableread{strong.dat}\strongtable
\pgfplotstablecreatecol[create col/expr={\thisrow{setup}-\thisrow{eps}}]{rest}\strongtable
\pgfplotstablecreatecol[create col/expr={\thisrow{setup}+\thisrow{solution}}]{total}\strongtable
\pgfplotstablegetelem{0}{total}\of{\strongtable}
\let\relativeTo\pgfplotsretval
\pgfplotstablecreatecol[create col/expr={\relativeTo/\thisrow{total}}]{speedup}\strongtable
\begin{table}
    \caption{Strong scaling for problem Hardesty3 
    ($m=\pgfmathprintnumber[fixed]{8217820}$ and $n=\pgfmathprintnumber[fixed]{7591564}$) 
    for $N$ ranging from \pgfmathprintnumber{16} to \pgfmathprintnumber{4096} subdomains. All times are in seconds. 
    Column 2 reports the LSQR iteration count. Column 4 reports the setup time
    minus the concurrent solution time of the generalized eigenproblems, which
    is given in column 3. }
  \label{tab:strong-scaling}
  \centering
\pgfplotstabletypeset[every head row/.style={before row=\hline,after row=\hline},
                      every last row/.style={after row=\hline},
                      every even row/.style={before row={\rowcolor[gray]{0.9}}},
                      columns={N,iteration,eps,rest,solution,Nc,total,speedup},
                      columns/Nc/.style={int detect,column name=$n_0$,dec sep align,column type/.add={}{|},assign column name/.style={
         /pgfplots/table/column name={\multicolumn{2}{c|}{##1}}
      }},
                      display columns/7/.style={column name={Speedup},precision=1,fixed zerofill},every row 0 column 7/.style={postproc cell content/.style={@cell content={\ensuremath{-}}}},
                      display columns/6/.style={column name={Total},precision=1,fixed zerofill,dec sep align},
                      display columns/4/.style={column name={Solve},precision=1,fixed zerofill,dec sep align},
                      display columns/3/.style={column name={\hspace*{-0.1cm}Setup},precision=1,fixed zerofill,dec sep align},
                      display columns/2/.style={column name={\hspace*{-0.1cm}Eigensolve},precision=1,fixed zerofill,dec sep align},
                      display columns/1/.style={column name={Iterations}},
                      display columns/0/.style={column name=$N$,dec sep align,column type/.add={}{|},assign column name/.style={
         /pgfplots/table/column name={\multicolumn{2}{c|}{##1}}
      }}]\strongtable
\end{table}

\section{Concluding comments} \label{sec:conclusion}
%
%
Solving large-scale sparse linear least-squares problems is known to be challenging.
Previously proposed preconditioners
 have generally been serial and have involved incomplete factorizations
of $A$ or $C= A^\top A$. In this paper, we have employed ideas
that have been developed in the area of domain decomposition, which (as far as we
are aware) have not previously been applied to least-squares problems.
In particular, we have exploited recent work by Al Daas and Grigori \cite{AldG19}
on algebraic domain decomposition preconditioners
for SPD systems to propose a new two-level algebraic domain preconditioner for the
normal equations matrix $C$.
We have used the concept of an algebraic local SPSD splitting of an SPD matrix and
we have shown that the structure of $C$ as the product of $A^\top$ and $A$ can be used to efficiently
perform the splitting. Furthermore, we have proved that using the two-level preconditioner,
the spectral condition number of the preconditioned normal equations matrix is
bounded from above independently of the number of the subdomains and the size of 
the problem. Moreover, this upper bound depends on a parameter $\tau$ that can be 
chosen by the user to decrease (resp.\ increase) the upper bound with the costs of
setting up the preconditioner being larger (resp.\ smaller).

The new two-level preconditioner has been implemented in parallel within PETSc.
Numerical experiments on a range of problems from real applications have shown that
whilst both one-level and two-level domain decomposition preconditioners
are effective when used with LSQR to solve the normal equations, the latter
consistently results in significantly faster convergence.
It also outperforms other possible preconditioners, both in terms of robustness
and iteration counts. Furthermore, our numerical experiments on a set of challenging
least-squares problems show that the two-level preconditioner is robust with respect
to the parameter $\tau$. Moreover, a strong scalability test of the two-level 
preconditioner assessed its robustness with respect to the number of
subdomains.

Future work includes extending the approach to develop
preconditioners for solving large sparse--dense least-squares problems in which
$A$ contains a small number of rows that have many more entries than the other rows. These
cause the normal equations matrix to be dense and so they need to be handled separately
(see, for example, the recent work of Scott and T\accent23uma~\cite{ScoT21} and references therein).
As already observed, we also plan to consider multilevel variants to allow the use
of a larger number of subdomains and processes.
\appendix
\section*{Acknowledgments}
This work was granted access to the
GENCI-sponsored HPC resources of TGCC@CEA under allocation A0090607519. 
The authors would like to thank L.~Dalcin, V.~Hapla, and T.~Isaac for
their recent contributions to PETSc that made the implementation of our
preconditioner more flexible. 
{H. Al Daas and J.~A.~Scott were partially supported by EPSRC grant EP/W009676/1.
Finally, we are grateful to two anonymous
reviewers for their constructive feedback. 
}

\section*{Code reproducibility}
Interested readers are referred to \url{https://github.com/prj-/aldaas2021robust/blob/main/README.md}
for setting up the appropriate requirements, compiling, and running our proposed
preconditioner. Fortran, C, and Python source codes are provided.

\bibliographystyle{siamplain}
\bibliography{main}
\end{document}